\setlist{
	leftmargin=*, topsep = 4pt, 
	itemsep = 1pt, 
}
\title{KKT Reformulations for Single Leader and Multi-Follower Games}
\author[1,2]{Parin Chaipunya}
\author[3]{Thirumulanathan D}
\author[3]{Joydeep Dutta}
\affil[1]{
	Department of Mathematics, Faculty of Science, King Mongkut's University of Technology Thonburi\\
	126 Pracha Uthit Rd., Bang Mod, Thung Khru, Bangkok 10140, Thailand.
}
\affil[2]{
	The Joint Graduate School of Energy and Environment (JGSEE), King Mongkut's University of Technology Thonburi\\
	126 Pracha Uthit Rd., Bang Mod, Thung Khru, Bangkok 10140, Thailand.
}
\affil[3]{
	Indian Institute of Technology Kanpur (IIT Kanpur), \\
  Kalyanpur, Kanpur, Uttar Pradesh 208016, India.
}
\date{}
\begin{document}

\maketitle \vspace{-4.5em}
\begin{abstract}
	
  We consider a bilevel optimization problem having a single leader and multiple followers. 
  The followers choose their strategies simultaneously, and are assumed to converge to a Nash equilibrium strategy profile. 
  We begin by providing a practical example of such a problem in an oligopoly setting. 
  We then show the existence of a Nash equilibrium when the objective function of each follower is convex in its optimizing variable, and the feasible set is compact, convex, and nonempty. 
  We then consider the KKT reformulation of the single leader multi-follower game (henceforth, SLMFG), and show using examples that the solutions of both the problems need not be the same, even when each of the followers' problem is convex. 
  In particular, we show that the global minima of both the problems may differ if the follower's problem does not satisfy the Slater's condition. 
  We then show that the local minima of the SLMFG and its KKT reformulation are the same if, in addition to convexity and Slater's constraints, the local minimum point remains a local minimum for every Lagrange multiplier in each of the followers' problem. 
  Given that this condition is hard to verify in practice, we provide another condition for the local minima of the two problems to be the same using constant rank constraint qualification (CRCQ). 
  We again show using examples that the local optima of the two problems may differ if the conditions are not satisfied.
	
	\medskip
	
	\noindent{\bfseries Keywords:} Bilevel program; Single leader multi-follower game; Constriant qualification; Local solution
	
\end{abstract}

\normalsize

\section{Introduction}

In this paper we consider a bilevel problem with one leader, denoted with~$\leader$, and~$n$ followers, denoted with~$\follower_{1},\dots,\follower_{n}$ respectively. 
Written by~$\Agent := \{\leader,\follower_{1},\dots,\follower_{n}\}$ and~$\Follower := \{\follower_{1},\dots,\follower_{n}\}$, respectively, the set of all players (or agents) and the set of followers. 
Each player~$\agent \in \Agent$ controls his action over the action space~$\Control^{\agent}$ and evaluate his criterion function~$\criterion^{\agent} : \prod_{\bgent \in \Agent}\Control^{\bgent} \to \R$.
We write~$\Control^{\Follower} := \prod_{\follower \in \Follower} \Control^{\follower}$.
For any~$\follower \in \Follower$, we use the notation~$\Control^{-\follower} := \prod_{\gollower \in \Follower \setminus \{\follower\}} \Control^{\gollower}$ whose generic element is denoted by~$\control^{-\follower}$. 
It is typical to represent any vector~$\control^{\Follower} \in \Control^{\Follower}$ with the decomposition~$\control^{\Follower} = (\control^{\follower}, \control^{-\follower}) \in \Control^{\follower} \times \Control^{-\follower}$.
For any given leader's action~$\control^{\leader} \in \Control^{\leader}$, the followers altogether solve a~$\control^{\leader}$-parametrized \emph{Nash equilibrium problem} (briefly, \emph{NEP}): 
Find a \emph{Nash equilibriuam}, i.e., a vector~$\control^{\Follower} \in \Control^{\Follower}$ such that for each~$\follower \in \Follower$, the element~$\control^{\follower}$ solves the problem
\begin{subequations}\label{eq:follower_NEP}
  \begin{empheq}[left=\empheqlbrace]{align}
    \min_{\hat{\control}^{\follower}}\quad & \criterion^{\follower} (\control^{\leader},\hat{\control}^{\follower},\control^{-\follower}) \\
    \text{s.t.}\quad & \hat{\control}^{\follower} \in \Constraint^{\follower}(\control^{\leader}),
  \end{empheq}
\end{subequations}
where 
\begin{alignat*}{2}
  & Y^{\follower}(\control^{\leader}) := \{\control^{\follower} \in \Control^{\follower} \mid \confun^{\follower}_{j}(\control^{\leader},\control^{\follower}) \leq 0 \quad &&(\forall j = 1,\dots,p^{\follower})\}, \\
  & \confun^{\follower}_{j} : \Control^{\leader} \times \Control^{\follower} \to \R  && (\forall j = 1,\dots,p^{\follower}).
\end{alignat*}
denotes the constraint set of the problem of a follower~$\follower \in \Follower$. 
The solution set of this problem with at the leader's decision~$\control^{\leader}~$is denoted with~$\NEP(\control^{\leader})$. 
The task of the leader is to make his \emph{optimal} choice~$\control^{\leader}$ based on his criterion function~$\criterion^{\leader}$ constrained to~$\control^{\Follower} \in \NEP(\control^{\leader})$, i.e.
\begin{subequations}\label{eq:SLMFG}
  \begin{empheq}[left=\empheqlbrace]{align}
    \min_{\control^{\leader},\control^{\Follower}} \quad & \criterion^{\leader}(\control^{\leader},\control^{\Follower}) \\
    \text{s.t.}\quad  & \control^{\leader} \in \Constraint^{\leader} \subset \Control^{\leader} \\
                      & \control^{\Follower} \in \NEP(\control^{\leader}).
  \end{empheq}
\end{subequations} 
This formulation of a \emph{single leader multi-follower game} is commonly known as the standard \emph{optimistic} setting, which corresponds to the case where the  leader minimizes over~$\Constraint^{\leader}$ the optimistic value function~$\psi_{o}(\control^{\leader}) := \inf_{\control^{\Follower} \in \NEP(\control^{\leader})} \criterion^{\leader}(\control^{\leader},\control^{\Follower})$.
On the contrary, one could also consider the \emph{pessimistic} approach to the problem by framing the leader to minimize over~$\Constraint^{\leader}$ the pessimistic value function~$\psi_{p}(\control^{\leader}) := \sup_{\control^{\Follower} \in \NEP(\control^{\leader})} \criterion^{\leader}(\control^{\leader},\control^{\Follower})$.
In this paper, we limited our scope to the standard optimistic formulation as described by~\eqref{eq:SLMFG}.

When the set of follower $\Follower$ is singleton, then the follower's Nash equilibrium problem~\eqref{eq:follower_NEP} becomes an optimization problem.
This special case is referred to as a \emph{bilevel program}.

To ensure that the SLMFG \eqref{eq:SLMFG} is feasible, one may refer, \emph{e.g.}, to the following theorem from \citet{zbMATH03074557} which shows the existence of Nash equilibrium for the followers' problems.
\begin{theorem}[\citet{zbMATH03074557}]
  A Nash equilibrium for \eqref{eq:follower_NEP} exists for every~$\control^{\leader} \in \Control^{\leader}$, if the following assumptions are satisfied:
  \begin{itemize}[label=$\circ$, leftmargin=*]
    \item~$\Constraint^{\follower}(\control^{\leader})$ is compact, convex, and nonempty for all~$\control^{\leader} \in \Control^{\leader}$ and for all~$\follower \in \Follower$,
    \item~$\criterion^{\follower}(\control^{\leader}, \control^{\Follower})$ is continuous in~$\control^{\Follower}$ for every~$\control^{\leader} \in \Constraint^{\leader}$ and for all~$\follower \in \Follower$,
    \item~$\criterion^{\follower}(\control^{\leader},\control^{\follower},\control^{-\follower})$ is convex in~$\control^{\follower}$ for all~$(\control^{\leader},\control^{-\follower})$ and for all~$\follower \in \Follower$.
  \end{itemize}
\end{theorem}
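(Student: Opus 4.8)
The plan is to fix the leader's action $\control^{\leader} \in \Control^{\leader}$ once and for all, treating it purely as a parameter that determines the feasible sets $\Constraint^{\follower}(\control^{\leader})$ and the objectives $\criterion^{\follower}(\control^{\leader},\cdot)$, and then to produce a Nash equilibrium by a fixed-point argument applied to the joint best-response correspondence. For each follower $\follower \in \Follower$ and each rival profile $\control^{-\follower}$, define the best-response set
\[
  B^{\follower}(\control^{-\follower}) := \operatorname*{arg\,min}_{\hat{\control}^{\follower} \in \Constraint^{\follower}(\control^{\leader})} \criterion^{\follower}(\control^{\leader}, \hat{\control}^{\follower}, \control^{-\follower}),
\]
and assemble the product correspondence $B := \prod_{\follower \in \Follower} B^{\follower}$ on the set $\Control^{\Follower}_{*} := \prod_{\follower \in \Follower} \Constraint^{\follower}(\control^{\leader})$. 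By the definition of Nash equilibrium, a profile $\control^{\Follower}$ solves \eqref{eq:follower_NEP} precisely when $\control^{\Follower} \in B(\control^{\Follower})$, so it suffices to exhibit a fixed point of $B$, which I would obtain from Kakutani's fixed-point theorem.

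First I would verify the hypotheses of Kakutani on the domain $\Control^{\Follower}_{*}$. The set $\Control^{\Follower}_{*}$ is nonempty, compact, and convex as a finite product of sets with those properties, by the first bullet. Each value $B^{\follower}(\control^{-\follower})$ is nonempty because $\criterion^{\follower}(\control^{\leader},\cdot,\control^{-\follower})$ is continuous (second bullet) and $\Constraint^{\follower}(\control^{\leader})$ is compact, so Weierstrass applies; and it is convex because $\hat{\control}^{\follower} \mapsto \criterion^{\follower}(\control^{\leader},\hat{\control}^{\follower},\control^{-\follower})$ is convex over the convex set $\Constraint^{\follower}(\control^{\leader})$ (third bullet), whence the minimizer set is a convex sublevel set at the optimal value. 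The values are therefore nonempty, convex, and compact.

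The hard part will be establishing the closed-graph (equivalently, upper-semicontinuity) property of $B$. Here the decisive simplification is that $\Constraint^{\follower}(\control^{\leader})$ does not vary with the rivals' actions $\control^{-\follower}$; only the objective does, so no delicate handling of a moving constraint correspondence is needed. I would argue sequentially: take $\control^{-\follower}_{k} \to \control^{-\follower}$ and $\hat{\control}^{\follower}_{k} \to \hat{\control}^{\follower}$ with $\hat{\control}^{\follower}_{k} \in B^{\follower}(\control^{-\follower}_{k})$; since $\Constraint^{\follower}(\control^{\leader})$ is closed, the limit $\hat{\control}^{\follower}$ is feasible, and for any competitor $z \in \Constraint^{\follower}(\control^{\leader})$ the optimality inequality $\criterion^{\follower}(\control^{\leader},\hat{\control}^{\follower}_{k},\control^{-\follower}_{k}) \le \criterion^{\follower}(\control^{\leader}, z, \control^{-\follower}_{k})$ passes to the limit by continuity of $\criterion^{\follower}$ in the joint follower variable, giving $\hat{\control}^{\follower} \in B^{\follower}(\control^{-\follower})$. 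Taking products preserves the closed graph. With all hypotheses in place, Kakutani's theorem yields a fixed point $\control^{\Follower} \in B(\control^{\Follower})$, that is, a Nash equilibrium of \eqref{eq:follower_NEP}; since $\control^{\leader} \in \Control^{\leader}$ was arbitrary, existence holds for every leader's action.
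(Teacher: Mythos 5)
Your proof is correct, but note that the paper itself contains no proof of this statement: it is quoted as a known result of Debreu's social equilibrium existence theorem, \citet{zbMATH03074557}, so there is no internal argument to compare against. Your route --- fixing $\control^{\leader}$, forming the joint best-response correspondence $B = \prod_{\follower \in \Follower} B^{\follower}$ on the product $\prod_{\follower \in \Follower} \Constraint^{\follower}(\control^{\leader})$, and invoking Kakutani --- is the classical argument, and each verification is sound: nonemptiness of values by Weierstrass, convexity of values as the intersection of a convex sublevel set with the convex feasible set, compactness of values as closed subsets of a compact set, and the closed graph via the sequential argument, which works precisely because (as you observe) the constraint set does not move with $\control^{-\follower}$ and because continuity is assumed jointly in $\control^{\Follower}$. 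The comparison with the cited source is this: Debreu's theorem treats generalized games (abstract economies) in which each player's feasible set is a correspondence of the rivals' actions, and its proof must therefore control a moving constraint correspondence; your proof exploits the fact that here each $\Constraint^{\follower}(\control^{\leader})$ is fixed once $\control^{\leader}$ is fixed, yielding a shorter, self-contained specialization. Two minor points you should make explicit: Kakutani's theorem in the form you use requires the action spaces $\Control^{\follower}$ to be (subsets of) finite-dimensional Euclidean spaces, which the paper only assumes later when setting $\Control^{\agent} = \R^{n_{\agent}}$ for the KKT reformulation; and the closedness of $\Constraint^{\follower}(\control^{\leader})$ that you use in the closed-graph step is part of the assumed compactness, so it is available but worth citing as such.
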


To practically solve a bilevel program or a SLMFG, one requires to reformulate the problem into a single-level problem so that it becomes more numerically tractable.
A classical solution method for~\eqref{eq:SLMFG} involves a reformulation of~$\NEP(\control^{\leader})$ using the concatenated KKT system.
Consider the case that~$\Control_{\agent} = \R^{n_{\agent}}$ for all~$\agent \in \Agent$. 
Suppose that the maps~$\control^{\follower} \mapsto \criterion^{\follower}(\control^{\follower},\control^{-\follower})$ and~$\control^{\follower} \mapsto \confun^{\follower}_{j}(\control^{\leader},\control^{\follower})$ are convex and continuously differentiable for all~$\follower \in \Follower$ and~$j = 1,\dots,p^{\follower}$.
We also assume that all the gradients~$\grad_{\control^{\follower}} \criterion^{\follower}(\control^{\follower},\control^{-\follower})$ and~$\grad_{\control^{\follower}} \confun^{\follower}_{j}(\control^{\leader},\control^{\follower})$ are continuous in all of their variables. 
Then the \emph{Mathematical Program with Complementarity Constraints reformulation} (briefly, \emph{MPCC reformulation}), or the \emph{KKT reformulation}, for~\eqref{eq:SLMFG} is defined by
\begin{subequations}\label{eq:MPCC_reformulation}
  \begin{empheq}[left=\empheqlbrace]{alignat=3}
    \min_{\control^{\leader},\control^{\Follower}}\quad  & \criterion^{\leader}(\control^{\leader},\control^{\Follower}) \\
    \text{s.t.}\quad & \control^{\leader} \in \Constraint^{\leader}  \\
                     & \displaystyle \grad_{\control^{\follower}} \criterion^{\follower}(\control^{\leader},\control^{\follower},\control^{-\follower}) + \sum_{j=1}^{p^{\follower}} \mult^{\follower}_{j} \grad_{\control^{\follower}} \confun^{\follower}_{j} (\control^{\leader},\control^{\follower}) = 0   && ( \forall \follower \in \Follower ) \label{eq:multiplier-1} \\
                     & \confun^{\follower} (\control^{\leader},\control^{\follower}) \leq 0, \quad \mult^{\follower} \geq 0, \quad \langle \mult^{\follower}, \confun^{\follower}(\control^{\leader},\control^{\follower}) \rangle = 0 \quad  && ( \forall \follower \in \Follower ), \label{eq:multiplier-2}
  \end{empheq}
\end{subequations}
where~$\mult^{\follower} = (\mult^{\follower}_{1},\dots,\mult^{\follower}_{p^{\follower}}) \in \R^{p^{\follower}}$ and~$\confun^{\follower}(\control^{\leader},\control^{\follower}) = (\confun^{\follower}_{1}(\control^{\leader},\control^{\follower}),\dots,\confun^{\follower}_{p^{\follower}}(\control^{\leader},\control^{\follower}))$.
We shall adopt the notation~$\mult^{\Follower} = (\mult^{\follower}_{j})_{\substack{\follower \in \Follower \\ j = 1,\dots,p^{\follower}}} \in \R^{p^{\Follower}} = \prod_{\follower \in \Follower} \R^{p^{\follower}}$.
For each~$\follower \in \Follower$, we denote with~$\Mult^{\follower}(\control^{\leader},\control^{\Follower})$ the set of the corresponding multipliers, \emph{i.e.}, of all~$\mult^{\follower} = (\mult^{\follower}_{1},\dots,\mult^{\follower}_{p^{\follower}})$ satisfying~\eqref{eq:multiplier-1} and~\eqref{eq:multiplier-2}.
We also write~$\Mult^{\Follower}(\control^{\leader},\control^{\Follower}) := \prod_{\follower \in \Follower} \Mult^{\follower}(\control^{\leader},\control^{\Follower})$.

In case of a single-follower (\emph{i.e.} the bilevel programs), the relationship betwen the local solution sets of \eqref{eq:SLMFG} and its MPCC reformulation \eqref{eq:MPCC_reformulation} were studied by \citet{zbMATH06008674}.
In particular, they \cite{zbMATH06008674} pointed out that the local solutions of the two problems could be independent and also gave sufficient conditions for which they coincide.
Similar results were studied by \citet{zbMATH07063784} for the pessimistic bilevel programs.

The aim of this paper is to present, in the same spirits as~\citet{zbMATH06008674} and~\citet{zbMATH07063784}, to the optimistic multi-follower setting.
In Section~\ref{sec:main}, we present sufficient conditions under which the local solution set of the SLMFG~\eqref{eq:SLMFG} is included in that of the MPCC problem~\eqref{eq:MPCC_reformulation}, and vice versa.
We also give examples to justify that the result might not be true if some of the assumptions were dropped from our main theorems.
In Section~\ref{sec:GNEP-lower}, we argue that our result is also applicable to the case where the followers might have shared constraints with the structure of~\citet{zbMATH03229265}.
In particular, we show that a Rosen's generealized Nash equilibrium problem of the followers could be equivalently presented as a classical Nash equilibrium problem.
As a consequence, all the results in Section~\ref{sec:GNEP-lower} are still applicable and their assumptions can directly be verified explicitly with the functional description.

\section{KKT Reformulation in Standard Optimistic SLMFG}\label{sec:main}

This section is dedicated to the conditions under which the solutions of the original SLMFG~\eqref{eq:SLMFG} and the corresponding MPCC reformulation~\eqref{eq:MPCC_reformulation} are included in one another.
Each sufficient condition is supplied with an example to emphasize that the assumed properties are required and could not be dropped without losing the desired conclusion.

\begin{theorem}\label{thm:SLMFG->MPCC-local}
  Let~$(\bar{\control}^{\leader},\bar{\control}^{\Follower})$ be a local solution of the SLMFG~\eqref{eq:SLMFG} and let the follower's problem~\eqref{eq:follower_NEP} for every follower~$\follower \in \Follower$ be such that the maps~$\control^{\follower} \mapsto \criterion^{\follower}(\control^{\leader},\control^{\follower},\control^{-\follower})$ and~$\control^{\follower} \mapsto \confun^{\follower}_{j}(\control^{\leader},\control^{\follower})$ are convex with Slater's CQ satisfied at~$\control^{\leader} = \bar{\control}^{\leader}$.
  Then there exist~$\bar{\mult}^{\Follower} = (\mult^{\follower}_{j})_{\substack{\follower \in \Follower \\ j =1,\dots,p^{\follower}}} \geq 0$ such that~$(\bar{\control}^{\leader},\bar{\control}^{\Follower},\bar{\mult}^{\Follower})$ is a local solution of the MPCC problem~\eqref{eq:MPCC_reformulation}.
\end{theorem}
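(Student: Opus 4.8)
The plan is to exploit the fact that, under convexity of each follower's problem, the KKT system \eqref{eq:multiplier-1}--\eqref{eq:multiplier-2} \emph{characterizes} membership in $\NEP(\control^{\leader})$, while Slater's CQ is needed only to guarantee that a multiplier vector exists at the base point. The argument therefore uses the two hypotheses asymmetrically, and it rests on the observation that the objective $\criterion^{\leader}(\control^{\leader},\control^{\Follower})$ of the MPCC reformulation \eqref{eq:MPCC_reformulation} does not depend on the multiplier block $\mult^{\Follower}$, so the multiplier coordinates can be left entirely free when we pass to a neighbourhood.

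First I would record the two facts coming from convex programming applied to each follower's parametric problem \eqref{eq:follower_NEP}. \emph{Fact (i), sufficiency, using convexity only:} if at a point $(\control^{\leader},\control^{\Follower})$ there is some $\mult^{\Follower}\geq 0$ satisfying \eqref{eq:multiplier-1}--\eqref{eq:multiplier-2}, then for each $\follower\in\Follower$ the stationarity of the Lagrangian together with complementarity forces $\control^{\follower}$ to be a global minimizer of the convex program $\min_{\hat{\control}^{\follower}\in\Constraint^{\follower}(\control^{\leader})}\criterion^{\follower}(\control^{\leader},\hat{\control}^{\follower},\control^{-\follower})$, whence $\control^{\Follower}\in\NEP(\control^{\leader})$; this needs no constraint qualification. \emph{Fact (ii), necessity, using Slater's CQ:} if $\control^{\Follower}\in\NEP(\control^{\leader})$ and Slater's CQ holds at $\control^{\leader}$, the classical KKT theorem for convex programs yields $\mult^{\follower}\geq 0$ for every $\follower$, i.e. $\Mult^{\Follower}(\control^{\leader},\control^{\Follower})\neq\emptyset$.

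With these in hand, I would first produce the asserted multiplier. Since $(\bar{\control}^{\leader},\bar{\control}^{\Follower})$ is SLMFG-feasible we have $\bar{\control}^{\Follower}\in\NEP(\bar{\control}^{\leader})$, and Slater's CQ is assumed exactly at $\bar{\control}^{\leader}$; Fact (ii) then gives $\bar{\mult}^{\Follower}\geq 0$ making $(\bar{\control}^{\leader},\bar{\control}^{\Follower},\bar{\mult}^{\Follower})$ feasible for \eqref{eq:MPCC_reformulation}. To verify local optimality, let $U$ be a neighbourhood of $(\bar{\control}^{\leader},\bar{\control}^{\Follower})$ witnessing that it is a local solution of the SLMFG, and set $V:=U\times\R^{p^{\Follower}}$, an open neighbourhood of $(\bar{\control}^{\leader},\bar{\control}^{\Follower},\bar{\mult}^{\Follower})$. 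For any MPCC-feasible $(\control^{\leader},\control^{\Follower},\mult^{\Follower})\in V$, feasibility means $\control^{\leader}\in\Constraint^{\leader}$ and that $\mult^{\Follower}\geq 0$ solves \eqref{eq:multiplier-1}--\eqref{eq:multiplier-2} at $(\control^{\leader},\control^{\Follower})$; since convexity holds at every parameter value, Fact (i) yields $\control^{\Follower}\in\NEP(\control^{\leader})$. Thus $(\control^{\leader},\control^{\Follower})$ is SLMFG-feasible and lies in $U$, so $\criterion^{\leader}(\bar{\control}^{\leader},\bar{\control}^{\Follower})\leq\criterion^{\leader}(\control^{\leader},\control^{\Follower})$, which is precisely the local-minimality inequality for \eqref{eq:MPCC_reformulation} on $V$, because the MPCC objective is $\mult^{\Follower}$-independent.

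I do not expect a serious obstacle in this (the comparatively easy) inclusion; the single point that must be handled carefully is the asymmetric use of the hypotheses. Only convexity, which holds globally in the parameter, is invoked at the neighbouring points via Fact (i), so Slater's CQ is genuinely needed only at $\bar{\control}^{\leader}$ to secure feasibility of the base point. Combined with the fact that the multiplier block never enters the objective and hence may range over all of $\R^{p^{\Follower}}$ in $V$, this is what makes the passage from a local SLMFG solution to a local MPCC solution go through without any further qualification.
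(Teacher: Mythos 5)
Your proof is correct and follows essentially the same route as the paper: Slater's CQ plus convexity gives KKT necessity at the base point (producing the multiplier $\bar{\mult}^{\Follower}$), convexity alone gives KKT sufficiency so that MPCC-feasible points map back to SLMFG-feasible points, and the conclusion follows because the two objectives coincide and are independent of $\mult^{\Follower}$. If anything, your write-up is more careful than the paper's, which states the NEP/KKT equivalence only at the fixed parameter $\bar{\control}^{\leader}$ and leaves implicit the step you make explicit --- that nearby MPCC-feasible points with leader variable $\control^{\leader}\neq\bar{\control}^{\leader}$ are handled by KKT sufficiency under convexity alone, with no constraint qualification needed there.
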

\begin{proof}
  Observe that~$\bar{\control}^{\Follower} \in \NEP(\bar{\control}^{\leader})$ implies
  \[
    \bar{\control}^{\follower} \in \argmin_{\control^{\follower} \in \Constraint^{\follower}(\control^{\leader})} \criterion^{\follower}(\control^{\leader},\control^{\follower},\control^{-\follower}).
  \]
  This, in combination with the convexity of~$\criterion^{\follower}$ and~$\confun^{\follower}$ in~$\control^{\follower}$ and the Slater's CQ at~$\bar{\control}^{\leader}$, yields inf and only if the KKT conditions hold for the follower~$\follower$ problem at~$(\bar{\control}^{\leader},\bar{\control}^{\Follower})$.
  Hence~$\control^{\Follower} \in \NEP(\bar{\control}^{\leader})$ if and only if there exists~$\bar{\mult}^{\Follower} \in \R^{p^{\Follower}}$ such that
  \begin{alignat*}{2}
    &\grad_{\control^{\follower}} \criterion^{\follower}(\bar{\control}^{\leader},\bar{\control}^{\follower},\bar{\control}^{-\follower}) + \sum_{j=1}^{p^{\follower}} \bar{\mult}^{\follower}_{j} \grad_{\control^{\follower}} \confun^{\follower}_{j} (\bar{\control}^{\leader},\bar{\control}^{\follower}) = 0 \quad && ( \forall \follower \in \Follower )  \\
    &\confun^{\follower} (\bar{\control}^{\leader},\bar{\control}^{\follower}) \leq 0, \quad \bar{\mult}^{\follower} \geq 0, \quad \langle \bar{\mult}^{\follower}, \confun^{\follower}(\bar{\control}^{\leader},\bar{\control}^{\follower}) \rangle = 0  \quad  &&( \forall \follower \in \Follower ).
  \end{alignat*}
  This means that~$\bar{\control}^{\Follower} \in \NEP(\bar{\control}^{\leader})$ if and only if~$(\bar{\control}^{\leader},\bar{\control}^{\Follower},\bar{\mult}^{\Follower})$ is a feasible point of~\eqref{eq:MPCC_reformulation}.
  Since the objective functions in~\eqref{eq:SLMFG} and~\eqref{eq:MPCC_reformulation} are the same,~$(\bar{\control}^{\leader},\bar{\control}^{\Follower},\bar{\mult}^{\Follower})$ is a local solution of~\eqref{eq:MPCC_reformulation}.
\end{proof}

Since a global solution is also a local solution, the above theorem is also valid for global solutions of a SLMFG~\eqref{eq:SLMFG} and its corresponding MPCC~\eqref{eq:MPCC_reformulation}.
The following example shows that the Slater's CQ assumption could not be dropped in order to attain the same conclusion.
\begin{example}
  Consider a two-follower setting, \emph{i.e.}~$\Follower = \{\follower_{1},\follower_{2}\}$.
  The leader $\leader$'s decision set is~$\Control^{\leader} = \R$ and both followers make decision over the set~$\Control^{\follower_{1}} = \Control^{\follower_{2}} = \R^{2}$.
  For a given~$\control^{\leader} \in \Control^{\leader}$, the follower $\follower_{1}$ solves the problem
  \begin{subequations}\label{equation:ex-nep-f1}
    \begin{empheq}[left=\empheqlbrace]{align}
      \min_{\control^{\follower_{1}} = (\control^{\follower_{1}}_{1}, \control^{\follower_{1}}_{2})} \quad & \control^{\follower_{1}}_{1} + \control^{\follower_{2}}_{2} \\
      \text{s.t.} \quad & (\control^{\follower_{1}}_{1})^{2} - \control^{\follower_{1}}_{2} \leq \control^{\leader} \\
                        & (\control^{\follower_{1}}_{1})^{2} + \control^{\follower_{1}}_{2} \leq 0,
    \end{empheq}
  \end{subequations}
  while the follower $\follower_{2}$ solves
  \begin{subequations}\label{equation:ex-nep-f2}
    \begin{empheq}[left=\empheqlbrace]{align}
      \min_{\control^{\follower_{2}} = (\control^{\follower_{2}}_{1},\control^{\follower_{2}}_{2})} \quad 
    & \control^{\follower_{1}}_{1} + \control^{\follower_{2}}_{2} \\
      \text{s.t.}\quad &(\control^{\follower_{2}}_{1})^{2} - \control^{\follower_{2}}_{2} \leq \control^{\leader} \\
                       & (\control^{\follower_{2}}_{1})^{2} + \control^{\follower_{2}}_{2} \leq 0.
    \end{empheq}
  \end{subequations}
  The optimal responses of~$\follower_{1}$ and~$\follower_{2}$, given the leader's action~$\control^{\leader} \geq 0$ are given by
  \[
    \bar{\control}^{\follower_{1}} = \bar{\control}^{\follower_{2}} = (-\sqrt{\control^{\leader}/2},-\control^{\leader}/2),
  \]
  with the Lagrange multipliers~$\bar{\mult}^{\follower_{1}}_{1} = \bar{\mult}^{\follower_{1}}_{2} = \bar{\mult}^{\follower_{2}}_{1} = \bar{\mult}^{\follower_{2}}_{2} = \frac{1}{4\sqrt{\control^{\leader}/2}}$ when $\control^{\leader} > 0$.
  At~$\control^{\leader} = 0$, the KKT conditions are not satisfied for the optimal point $(0,0)$, given that the problems are not regular when~$\control^{\leader} = 0$.

  Now consider the leader's problem to be 
  \begin{empheq}[left=\empheqlbrace]{align*}
    \min_{\control^{\leader},\control^{\Follower}} \quad & \control^{\leader} \\
    \text{s.t.} \quad & \control^{\Follower} \in \NEP(\control^{\leader}),
  \end{empheq}
  where in this case~$\NEP(\control^{\leader})$ corresponds to \eqref{equation:ex-nep-f1} and \eqref{equation:ex-nep-f2}.
  The (unique) solution to this SLMFG is~$(\hat{\control}^{\leader},\hat{\control}^{\Follower}) = (0,0,0,0,0)$.
  However, as calculated above, the corresponding MPCC is infeasible for~$\control^{\leader} = 0$.
\end{example}

The following theorem concerns with the converse of Theorem~\ref{thm:SLMFG->MPCC-local},
which together would gives the equivalence between global solutions of a SLMFG~\eqref{eq:SLMFG} and its MPCC reformulation~\eqref{eq:MPCC_reformulation}.
\begin{theorem}\label{thm:MPCC->SLMFG-global}
  Let~$(\bar{\control}^{\leader},\bar{\control}^{\Follower}\bar{\mult}^{\Follower})$ be a global solution of the MPCC~\eqref{eq:MPCC_reformulation} and let the follower's problem~\eqref{eq:follower_NEP} for all~$\follower \in \Follower$ be such that the maps~$\control^{\follower} \mapsto \criterion^{\follower}(\control^{\leader},\control^{\follower},\control^{-\follower})$ and~$\control^{\follower} \mapsto \confun^{\follower}_{j}(\control^{\leader},\control^{\follower})$ are convex with Slater's CQ satisfied at every~$\control^{\leader} \in \Constraint^{\leader}$.
  Then~$(\bar{\control}^{\leader},\bar{\control}^{\Follower})$ is a global solution of the SLMFG~\eqref{eq:SLMFG}.
\end{theorem}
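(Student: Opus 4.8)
The plan is to exploit the same KKT/optimality equivalence that underlies Theorem~\ref{thm:SLMFG->MPCC-local}, but now deployed globally, leveraging the crucial fact that the two formulations share an identical objective that does not depend on the multipliers~$\mult^{\Follower}$. Throughout, the projection~$(\control^{\leader},\control^{\Follower},\mult^{\Follower}) \mapsto (\control^{\leader},\control^{\Follower})$ is the bridge between the feasible sets of~\eqref{eq:MPCC_reformulation} and~\eqref{eq:SLMFG}.

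First I would record the feasibility correspondence. Because each follower's objective and constraints are convex in~$\control^{\follower}$ and Slater's CQ holds at every~$\control^{\leader} \in \Constraint^{\leader}$, the KKT system~\eqref{eq:multiplier-1}--\eqref{eq:multiplier-2} is, for each fixed~$\control^{\leader}$, both necessary and sufficient for each~$\control^{\follower}$ to solve the corresponding follower's problem. Consequently~$\control^{\Follower} \in \NEP(\control^{\leader})$ holds if and only if there exists some~$\mult^{\Follower}$ making~$(\control^{\leader},\control^{\Follower},\mult^{\Follower})$ feasible for~\eqref{eq:MPCC_reformulation}. Applying the necessity direction of this equivalence (which uses only convexity) to the given global MPCC solution shows that~$\bar{\control}^{\leader} \in \Constraint^{\leader}$ and~$\bar{\control}^{\Follower} \in \NEP(\bar{\control}^{\leader})$, so~$(\bar{\control}^{\leader},\bar{\control}^{\Follower})$ is feasible for the SLMFG~\eqref{eq:SLMFG}.

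Next I would establish global optimality by contradiction. Suppose some SLMFG-feasible pair~$(\control^{\leader},\control^{\Follower})$ satisfies~$\criterion^{\leader}(\control^{\leader},\control^{\Follower}) < \criterion^{\leader}(\bar{\control}^{\leader},\bar{\control}^{\Follower})$. Since~$\control^{\leader} \in \Constraint^{\leader}$ and~$\control^{\Follower} \in \NEP(\control^{\leader})$, the sufficiency direction of the equivalence---here invoking Slater's CQ precisely at this~$\control^{\leader}$---furnishes a multiplier vector~$\mult^{\Follower}$ with~$(\control^{\leader},\control^{\Follower},\mult^{\Follower})$ feasible for~\eqref{eq:MPCC_reformulation}. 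Because the MPCC objective coincides with~$\criterion^{\leader}$ and is independent of~$\mult^{\Follower}$, this lifted point attains a strictly smaller objective value than the purported global MPCC solution~$(\bar{\control}^{\leader},\bar{\control}^{\Follower},\bar{\mult}^{\Follower})$, a contradiction. Hence no improving pair exists and~$(\bar{\control}^{\leader},\bar{\control}^{\Follower})$ is a global solution of the SLMFG.

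The step I expect to be the crux is the lifting of an \emph{arbitrary} competing SLMFG-feasible point into the MPCC feasible set, and this is exactly where the hypothesis is strengthened relative to Theorem~\ref{thm:SLMFG->MPCC-local}: there Slater's CQ was required only at~$\bar{\control}^{\leader}$, because one only had to certify a single point as a local solution. Here the global comparison ranges over competitors with possibly different leader decisions~$\control^{\leader}$, so Slater's CQ must hold at \emph{every}~$\control^{\leader} \in \Constraint^{\leader}$ to guarantee that each such competitor admits a valid multiplier and therefore genuinely projects from the MPCC feasible set. Without this uniform qualification a strictly better SLMFG point could fail to lift, and the contradiction would break down.
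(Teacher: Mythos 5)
Your proof is correct and follows essentially the same route as the paper: assume a strictly better SLMFG-feasible competitor exists, use Slater's CQ at that competitor's leader decision together with convexity to lift it to an MPCC-feasible point with the same (strictly smaller) objective value, and contradict global optimality of~$(\bar{\control}^{\leader},\bar{\control}^{\Follower},\bar{\mult}^{\Follower})$. If anything, you are slightly more careful than the paper, which leaves implicit your first step---that the given MPCC solution projects to an SLMFG-feasible point, via sufficiency of the KKT conditions under convexity alone.
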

\begin{proof}
  Assume that there exists~$\hat{\control}^{\leader} \in \Constraint^{\leader}$ and~$(\hat{\control}^{\Follower}) \in \NEP(\hat{\control}^{\leader})$ such that
  \[
    \criterion^{\leader}(\hat{\control}^{\leader},\hat{\control}^{\Follower}) < \criterion^{\leader}(\bar{\control}^{\leader},\bar{\control}^{\Follower}).
  \]
  Given that~$\criterion^{\follower}$ and~$\confun^{\follower}_{j}$ are all convex and Slater's CQ is satisfied at~$\hat{\control}^{\leader}$, the KKT conditions hold for each follower~$\follower \in \Follower$.
  This implies that~$(\hat{\control}^{\leader},\hat{\control}^{\Follower},\hat{\mult}^{\Follower})$ is feasible for the MPCC~\eqref{eq:MPCC_reformulation} for some multiplier~$\hat{\mult}^{\Follower} \geq 0$, which in turn implies that~$(\bar{\control}^{\leader},\bar{\control}^{\Follower}\bar{\mult}^{\Follower})$ is not a global solution of the MPCC~\eqref{eq:MPCC_reformulation}.
\end{proof}

The equivalence of global solutions of the SLMFG~\eqref{eq:SLMFG} and the MPCC problem~\eqref{eq:MPCC_reformulation} was proved under a different set of conditions in \citet[Theorem 26]{svensson:tel-02892212}.
In \cite{svensson:tel-02892212}, it was shown that if (1) the map~$\control^{\follower} \mapsto \criterion^{\follower}(\control^{\leader},\control^{\follower},\control^{-\follower})$ is convex, (2) the maps~$(\control^{\leader},\control^{\follower}) \mapsto \max_{j=1,\dots,p^{\follower}} \confun^{\follower}_{j} (\control^{\leader},\control^{\follower})$ is jointly convex for all~$\follower \in \Follower$, and (3) for all~$\follower \in \Follower$, there exists a~$(\control^{\leader}(\follower),\control^{\follower})$ such that~$\max_{j=1,\dots,p^{\follower}} \confun^{\follower}_{j}(\control^{\leader}(\follower), \control^{\follower}) < 0$, then~$(\bar{\control}^{\leader},\bar{\control}^{\Follower},\bar{\mult}^{\Follower})$ is a global solution of~\eqref{eq:MPCC_reformulation} implies~$(\bar{\control}^{\leader},\bar{\control}^{\follower})$ is a global solution of~\eqref{eq:SLMFG}.

Observe that the condition of convexity assumption used in \cite{svensson:tel-02892212} is stronger that our Theorem~\ref{thm:MPCC->SLMFG-global} but the condition on the Slater's CQ is weaker.
This means that our proposed result and the one of \citet{svensson:tel-02892212} provide alternative conditions for the equivalence of global solutions of the two problems.
An important question is whether the equivalence holds under weaker conditions.
In particular, we ask whether the equivalence holds if~$\control^{\follower} \mapsto \criterion^{\follower}(\control^{\leader},\control^{\follower})$ is convex, and for each~$\follower \in \Follower$, there exists a~$(\control^{\leader}(\follower),\control^{\follower})$ such that~$\max_{j=1,\dots,p^{\follower}} \confun^{\follower}_{j}(\control^{\leader}(\follower),\control^{\follower}) < 0$.
The following example provides a negative answer to this question.
\begin{example}
  Consider a two-follower setting, \emph{i.e.}~$\Follower = \{\follower_{1},\follower_{2}\}$.
  The leader $\leader$'s decision set is~$\Control^{\leader} = \R$ and both followers make decision over the set~$\Control^{\follower_{1}} = \Control^{\follower_{2}} = \R$.
  For a given~$\control^{\leader} \in \Control^{\leader}$, the follower $\follower_{i}$ ($i = 1,2$) solves the problem
  \begin{subequations}\label{equation:ex2-lower}
    \begin{empheq}[left=\empheqlbrace]{align}
      \min_{\control^{\follower_{i}}} \quad & \control^{\leader}(\control^{\follower_{1}} + \control^{\follower_{2}}) \\
      \text{s.t.} \quad & \control^{\leader}(\control^{\follower_{i}})^{2} \leq 0.
    \end{empheq}
  \end{subequations}
  Observe that the constraint is convex in~$\control^{\follower_{i}}$ but not jointly.
  Furthermore, there exists~$\control^{\leader} = \control^{\follower_{1}} = \control^{\follower_{2}} = -1$ which makes $\control^{\leader}(\control^{\follower_{i}})^{2} = -1 < 0$.

  Suppose that the leader's problem is given by
  \begin{subequations}\label{equation:ex-slmfg}
    \begin{empheq}[left=\empheqlbrace]{align}
      \min_{\control^{\leader}, \control^{\follower_{1}}, \control^{\follower_{2}}} \quad & (\control^{\leader} - 1)^{2} + (\control^{\follower_{1}})^{2} + (\control^{\follower_{2}})^{2} \\
      \text{s.t.}\quad & (\control^{\follower_{1}},\control^{\follower_{2}}) \in \NEP(\control^{\leader}),
    \end{empheq}
  \end{subequations}
  where~$\NEP(\control^{\leader})$ refers to the solution set of~\eqref{equation:ex2-lower}.
  The unique global solution of this SLMFG is $(\bar{\control}^{\leader}, \bar{\control}^{\follower_{1}}, \bar{\control}^{\follower_{2}}) = (1,0,0)$.

  On the other hand, the MPCC formulation of~\eqref{equation:ex-slmfg} reads
  \begin{subequations}\label{equation:ex-mpcc}
    \begin{empheq}[left=\empheqlbrace]{align}
      \min_{\control^{\leader}, \control^{\follower_{1}}, \control^{\follower_{2}}, \mult^{\follower_{1}},\mult^{\follower_{2}}} \quad & (\control^{\leader} - 1)^{2} + (\control^{\follower_{1}})^{2} + (\control^{\follower_{2}})^{2} \\
      \text{s.t.}\quad & \control^{\leader} + 2\mult^{\follower_{i}} \control^{\leader} \control^{\follower_{i}} \qquad (i=1,2) \\
                       & \control^{\leader}(\control^{\follower_{i}})^{2} \leq 0, \quad \mult^{\follower_{i}} \geq 0, \quad \mult^{\follower_{i}}\control^{\leader}(\control^{\follower_{i}})^{2} = 0, \qquad i=1,2.
    \end{empheq}
  \end{subequations}
  Every feasible point of this MPCC is of the form $(0,\control^{\follower_{1}},\control^{\follower_{2}},\mult^{\follower_{1}},\mult^{\follower_{2}})$.
  This implies that the solution~$(\bar{\control}^{\leader}, \bar{\control}^{\follower_{1}}, \bar{\control}^{\follower_{2}})$ is not feasible for~\eqref{equation:ex-mpcc}.
  Conversely, the vector~$(0,0,0,\mult^{\follower_{1}},\mult^{\follower_{2}})$ is an optimal point of the MPCC~\eqref{equation:ex-mpcc} but not optimal for the SLMFG~\eqref{equation:ex-slmfg}.
\end{example}

We presented already two examples where the global solutions of the SLMFG and the MPCC turn out to be different.
We next turn to investigating the relation between their local solutions.
First, we define the set of multipliers satisfying the KKT conditions for each follower~$\follower \in \Follower$ at~$(\control^{\leader},\control^{\Follower})$ as
\[
  \Mult^{\follower}(\control^{\leader},\control^{\Follower}) := \left\{
    \mult^{\follower} \in \R_{+}^{p^{\follower}} \ \left| \
    \begin{array}{c}\displaystyle
      \grad_{\control^{\follower}} \criterion^{\follower}(\control^{\leader},\control^{\follower},\control^{-\follower}) + \sum_{j=1}^{p^{\follower}} \mult^{\follower}_{j} \confun^{\follower}_{j}(\control^{\leader},\control^{\follower}) = 0 \\[.8em]
      \confun^{\follower}_{j} (\control^{\leader},\control^{\follower}) \leq 0, \quad \langle \mult^{\follower}, g^{\follower} (\control^{\leader},\control^{\follower}) = 0 
    \end{array}
\right.\right\}.
\]
According to \citet{Robinson1982}, this set-valued map~$\Mult^{\follower}(\cdot)$ is upper semicontinuous.
Moreover, we defin the set~$\Mult^{\Follower}(\cdot)$ is then defined as the product~$\prod_{\follower \in \Follower} \Mult^{\follower}(\cdot)$.
We have now the following theorem.
\begin{theorem}\label{thm:MPCC->SLMFG-local}
  Let the follower's problem~\eqref{eq:follower_NEP} for every follower~$\follower \in \Follower$ be such that the maps~$\control^{\follower} \mapsto \criterion^{\follower}(\control^{\leader},\control^{\follower},\control^{-\follower})$ and~$\control^{\follower} \mapsto \confun^{\follower}_{j}(\control^{\leader},\control^{\follower})$ are convex with Slater's CQ satisfied at~$\control^{\leader} = \bar{\control}^{\leader}$.
  Suppose that~$( \bar{\control}^{\leader}, \bar{\control}^{\Follower} ) \in \Control^{\leader} \times \Control^{\Follower}$ and for any~$\bar{\mult}^{\Follower} \in \Mult^{\Follower}$, the vector~$( \bar{\control}^{\leader}, \bar{\control}^{\Follower}, \bar{\mult}^{\Follower} )$ is a local solution of the MPCC problem~\eqref{eq:MPCC_reformulation}.
  Then~$( \bar{\control}^{\leader}, \bar{\control}^{\Follower} )$ is a local solution of the SLMFG~\eqref{eq:SLMFG}.
\end{theorem}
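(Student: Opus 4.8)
The plan is to argue by contradiction, lifting a hypothetical sequence of improving SLMFG-feasible points to MPCC-feasible triples and then exploiting the fact that local optimality of the MPCC is assumed over \emph{every} admissible multiplier. Suppose $(\bar{\control}^{\leader}, \bar{\control}^{\Follower})$ is not a local solution of the SLMFG~\eqref{eq:SLMFG}. Then there exists a sequence $(\control^{\leader}_{k}, \control^{\Follower}_{k}) \to (\bar{\control}^{\leader}, \bar{\control}^{\Follower})$ of SLMFG-feasible points, i.e. $\control^{\leader}_{k} \in \Constraint^{\leader}$ and $\control^{\Follower}_{k} \in \NEP(\control^{\leader}_{k})$, with $\criterion^{\leader}(\control^{\leader}_{k}, \control^{\Follower}_{k}) < \criterion^{\leader}(\bar{\control}^{\leader}, \bar{\control}^{\Follower})$ for every $k$.

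First I would upgrade each feasible point to a triple meeting the MPCC constraints. Because the follower objectives and constraints are convex in $\control^{\follower}$ and the constraint data are continuous in all variables, the Slater's CQ holding at $\bar{\control}^{\leader}$ persists with the same interior point on a neighborhood of $\bar{\control}^{\leader}$; hence for all large $k$ the KKT conditions are equivalent to optimality in each follower's problem, exactly as in the proof of Theorem~\ref{thm:SLMFG->MPCC-local}. Consequently $\Mult^{\Follower}(\control^{\leader}_{k}, \control^{\Follower}_{k}) \neq \emptyset$, and selecting $\mult^{\Follower}_{k} \in \Mult^{\Follower}(\control^{\leader}_{k}, \control^{\Follower}_{k})$ renders each triple $(\control^{\leader}_{k}, \control^{\Follower}_{k}, \mult^{\Follower}_{k})$ feasible for the MPCC~\eqref{eq:MPCC_reformulation}.

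The crux of the argument is to pin down a limit of the multipliers that lies in $\Mult^{\Follower}(\bar{\control}^{\leader}, \bar{\control}^{\Follower})$. For convex inequality systems the Slater's CQ coincides with the Mangasarian--Fromovitz condition, which forces the multiplier set to be locally bounded near $\bar{\control}^{\leader}$; therefore $\{\mult^{\Follower}_{k}\}$ is bounded and admits a subsequence $\mult^{\Follower}_{k_{l}} \to \bar{\mult}^{\Follower}$. The upper semicontinuity of $\Mult^{\Follower}(\cdot)$ recorded above after~\citet{Robinson1982}---equivalently, passing to the limit in the continuous KKT relations---then yields $\bar{\mult}^{\Follower} \in \Mult^{\Follower}(\bar{\control}^{\leader}, \bar{\control}^{\Follower})$, so $\bar{\mult}^{\Follower}$ is an admissible multiplier to which the hypothesis applies.

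Finally I would close the loop. By assumption $(\bar{\control}^{\leader}, \bar{\control}^{\Follower}, \bar{\mult}^{\Follower})$ is a local solution of the MPCC, so there is a neighborhood on which no MPCC-feasible triple attains a strictly smaller objective. But $(\control^{\leader}_{k_{l}}, \control^{\Follower}_{k_{l}}, \mult^{\Follower}_{k_{l}})$ converges to $(\bar{\control}^{\leader}, \bar{\control}^{\Follower}, \bar{\mult}^{\Follower})$, eventually enters this neighborhood, is MPCC-feasible, and satisfies $\criterion^{\leader}(\control^{\leader}_{k_{l}}, \control^{\Follower}_{k_{l}}) < \criterion^{\leader}(\bar{\control}^{\leader}, \bar{\control}^{\Follower})$, which contradicts local optimality. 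The main obstacle is precisely this multiplier-limit step: since the subsequential limit $\bar{\mult}^{\Follower}$ is not under our control, it is indispensable that the hypothesis demands local optimality for \emph{every} multiplier, and it is here that both the boundedness from the Slater's/MFCQ condition and the upper semicontinuity from~\citet{Robinson1982} do the essential work.
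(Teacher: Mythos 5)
Your proof is correct and follows essentially the same route as the paper's: argue by contradiction with an improving SLMFG-feasible sequence, use the persistence of Slater's CQ near $\bar{\control}^{\leader}$ to lift it to MPCC-feasible triples, extract a convergent multiplier subsequence whose limit lies in $\Mult^{\Follower}(\bar{\control}^{\leader},\bar{\control}^{\Follower})$, and contradict the assumed local optimality at that limit multiplier. In fact you make explicit a step the paper leaves implicit in its citation of \citet{Robinson1982}, namely that Slater's CQ (equivalently MFCQ for convex systems) yields local boundedness of the multiplier map, which is what guarantees the accumulation point exists before upper semicontinuity is invoked.
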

\begin{proof}
  Assume that~$(\bar{\control}^{\leader},\bar{\control}^{\Follower})$ is not a local minimum of~\eqref{eq:SLMFG}.
  Then there exists a sequence~$\{(\control^{\leader}_{k}, \control^{\Follower}_{k})\}$ such that
  \begin{subequations}
    \begin{alignat}{2}
      &(\control^{\leader}_{k}, \control^{\Follower}_{k}) \to (\bar{\control}^{\leader}, \bar{\control}^{\Follower}), \\
      &\control^{\leader}_{k} \in \Constraint^{\leader}, \quad (\control^{\Follower}_{k}) \in \NEP(\control^{\leader}_{k}) \quad &&(\forall k = 1,2,\dots) \\
      &\criterion^{\leader}(\control^{\leader}_{k}, \control^{\Follower}_{k}) < \criterion^{\leader} (\bar{\control}^{\leader}, \bar{\control}^{\Follower}) && (\forall k = 1,2,\dots). \label{eq:lessthan}
    \end{alignat}
  \end{subequations}
  Let us consider the problem of the follower~$\follower$.
  Given that the Slater's CQ is satisfied at~$\control^{\leader} = \bar{\control}^{\leader}$, we assert that it also satisfies at a neighborhood around~$\bar{\control}^{\leader}$.
  Without loss of generality, we may assume that the sequence~$\{\control^{\leader}_{k}\}$ belongs to this neighborhood.
  Now for each~$(\control^{\leader}_{k},\control^{\Follower}_{k})$, there exists a multiplier~$\mult^{\follower}_{k} \in \Mult^{\follower}(\control^{\leader}_{k},\control^{\Follower}_{k})$ having an accumulation point~$\bar{\mult}^{\follower} \in \Mult^{\follower}(\bar{\control}^{\leader},\bar{\control}^{\Follower})$ by the upper semicontinuity of~$\Mult^{\follower}(\cdot)$ \cite{Robinson1982}.
  Extracting a subsequence if neccessary, we may assume that the sequence~$\{(\control^{\leader}_{k},\control^{\Follower}_{k},\mult^{F}_{k})\}$ of feasible points of~\eqref{eq:MPCC_reformulation} is convergent to the point~$(\bar{\control}^{\leader},\bar{\control}^{\Follower},\bar{\mult}^{\Follower})$ which is also feasible to~\eqref{eq:MPCC_reformulation}.
  In view of~\eqref{eq:lessthan}, the point~$(\bar{\control}^{\leader},\bar{\control}^{\Follower})$ is not a local solution of the MPCC~\eqref{eq:MPCC_reformulation} and the theorem is thus proved.
\end{proof}

We insist here that the adjoint vector~$(\bar{\control}^{\leader},\bar{\control}^{\Follower},\bar{\mult}^{\Follower})$ is a local minimum of a MPCC problem for the whole set~$\Mult^{\Follower}(\bar{\control}^{\leader},\bar{\control}^{\Follower})$ in the above Theorem~\ref{thm:MPCC->SLMFG-local}.
In fact, when such a condition fails, the sets of local solutions of a MPCC and a SLMFG may differ even when their gloabal solution sets are identical.
This is shown in the following example.
\begin{example}
  Consider a two-follower setting, with~$\Follower = \{\follower_{1},\follower_{2}\}$ and~$\Control^{\leader} = \Control^{\follower_{1}} = \Control^{\follower_{2}} = \R$.
  Suppose that for any~$\control^{\leader} \in \Control^{\leader}$ and~$i = 1,2$, the follower~$\follower_{i}$'s problem is
  \begin{subequations}\label{eq:follower_exmp_same_global_diff_local}
    \begin{empheq}[left=\empheqlbrace]{alignat=3}
      \min_{\control^{\follower_{i}}} \quad & -&&\control^{\follower_{1}} - \control^{\follower_{2}} \\
      \text{s.t.} \quad & &&\control^{\leader} + \control^{\follower_{i}} \leq 1 \\
                        & -&&\control^{\leader} + \control^{\follower_{i}} \leq 1.
    \end{empheq}
  \end{subequations}
  The solutions~$(\bar{\control}^{\follower_{1}}, \bar{\control}^{\follower_{2}})$ of this Nash equilibrium problem, together with the corresponding multipliers~$(\bar{\mult}^{\follower_{1}},\bar{\mult}^{\follower_{2}})$, are given by
  \[
    \bar{\control}^{\follower_{i}} = \begin{cases}
      1 + \control^{\leader}  & \text{if~$\control^{\leader} \leq 0$,} \\
      1 - \control^{\leader}  & \text{if~$\control^{\leader} \geq 0$,} \\
    \end{cases} \qquad
    \bar{\mult}^{\follower_{i}} = \begin{cases}
      (0,1) & \text{if~$x < 0$,} \\
      (1,0) & \text{if~$x > 0$,} \\
      \conv\{(0,1),(1,0)\} & \text{if~$x = 0$.}
    \end{cases}
  \]
  Now, suppose that the leader's problem is
  \begin{subequations}\label{eq:leader_exmp_same_global_diff_local}
    \begin{empheq}[left=\empheqlbrace]{align}
      \min_{\control^{\leader},\control^{\follower_{1}},\control^{\follower_{2}}} \quad & (\control^{\leader} - 1)^{2} + (\control^{\follower_{1}} - 1)^{2} + (\control^{\follower_{2}} - 1)^{2} \\
      \text{s.t.} \quad & (\control^{\follower_{1}},\control^{\follower_{2}}) \in \NEP(\control^{\leader}),
    \end{empheq}
  \end{subequations}
  where~$\NEP(\control^{\leader})$ is the set of Nash equilibria of~\eqref{eq:follower_exmp_same_global_diff_local}.
  The SLMFG~\eqref{eq:leader_exmp_same_global_diff_local} has a unique global optimal point~$(\hat{\control}^{\leader},\hat{\control}^{\follower_{1}},\hat{\control}^{\follower_{2}}) = (\frac{1}{3},\frac{2}{3},\frac{2}{3})$ and no other local solutions.
  However, the corresponding MPCC has another local optimal solution, which is
  \[
    (\tilde{\control}^{\leader}, \tilde{\control}^{\follower_{1}}, \tilde{\control}^{\follower_{2}}, \tilde{\mult}^{\follower_{1}}, \tilde{\mult}^{\follower_{2}})
    = (0,1,1,(0,1),(0,1)).
  \]
  Nevertheless if we choose another multiplier, say~$\tilde{\tilde{\mult}}^{\follower_{1}} = \tilde{\tilde{\mult}}^{\follower_{2}} = (1,0)$, then the vector
  \[
    (\tilde{\control}^{\leader}, \tilde{\control}^{\follower_{1}}, \tilde{\control}^{\follower_{2}}, \tilde{\tilde{\mult}}^{\follower_{1}}, \tilde{\tilde{\mult}}^{\follower_{2}})
    = (0,1,1,(1,0),(1,0))
  \]
  is no longer a local solution of the same MPCC.
  This example thus illustrate that if a point fails to be a local minimum for every multipliers, the equivalence of local minima between a SLMFG and its MPCC reformulation could not be guaranteed.
\end{example}

The condition in Theorem~\ref{thm:MPCC->SLMFG-local} is hard to verify in practice since it involves verifying whether the local solution of~\eqref{eq:MPCC_reformulation} remains the local solution for uncountable points. 
In the following theorem, we propose a simplified process that involves only multipliers on the vertices but with the discrepancy of an additional {\em constant rank constraint qualification} (CRCQ).
We first define CRCQ.
\begin{definition}
  The follower's problem for the follower~$\follower \in \Follower$ (as in~\eqref{eq:follower_NEP}) satisfies the \emph{constant rank constraint qualification} (briefly, \emph{CRCQ}), at~$(\bar{\control}^{\leader},\bar{\control}^{\follower})$ if there exists a neighborhood~$V^{\follower}$ of~$(\bar{\control}^{\leader},\bar{\control}^{\follower})$ such that for each active index subset 
  \[
    I^{\follower} \subset \Active^{\follower}(\bar{\control}^{\leader},\bar{\control}^{\follower}) := \{j \mid \confun^{\follower}_{j}(\bar{\control}^{\leader},\bar{\control}^{\follower}) = 0\},
  \]
  the rank of the matrix~$(\grad_{\control^{\follower}} \confun^{\follower}_{j}(\control^{\leader},\control^{\follower}))_{j \in I^{\follower}}$ is a constant for~$(\control^{\leader},\control^{\follower}) \in V_{i}$.
\end{definition}

Now we are ready to show the simplified verification process to the vertex multipliers.
Note that a part of the proof borrows an idea from the simplex method to characterize the vertices of a set defined by linear constraints using basic and nonbasic variables.
We refer the reader to~\cite{zbMATH01032354} for more details.
\begin{theorem}
  Let the follower's problem~\eqref{eq:follower_NEP} for every follower~$\follower \in \Follower$ be such that the maps~$\control^{\follower} \mapsto \criterion^{\follower}(\control^{\leader},\control^{\follower},\control^{-\follower})$ and~$\control^{\follower} \mapsto \confun^{\follower}(\control^{\leader},\control^{\follower})$ are convex with Slater's CQ at~$\control^{\leader} = \bar{\control}^{\leader}$.
  Let~$(\bar{\control}^{\leader}, \bar{\control}^{\Follower},\bar{\mult}^{\Follower})$ be a local minimum of the MPCC problem~\eqref{eq:MPCC_reformulation} for all vertex points~$\bar{\mult}^{\Follower}$ of~$\Mult^{\Follower}(\bar{\control}^{\leader},\bar{\control}^{\Follower})$ and that the follower's problem satisfies the CRCQ at~$(\bar{\control}^{\leader},\bar{\control}^{\follower})$ for all~$\follower \in \Follower$.
  Then~$(\bar{\control}^{\leader},\bar{\control}^{\Follower})$ is a local minimum of the SLMFG~\eqref{eq:SLMFG}.
\end{theorem}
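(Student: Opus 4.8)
The plan is to argue by contradiction exactly as in the proof of Theorem~\ref{thm:MPCC->SLMFG-local}, the only difference being that I must arrange the multipliers produced along the minimizing sequence to accumulate at a \emph{vertex} of~$\Mult^{\Follower}(\bar{\control}^{\leader},\bar{\control}^{\Follower})$, since local minimality of the MPCC is now only assumed at vertices. Suppose $(\bar{\control}^{\leader},\bar{\control}^{\Follower})$ is not a local solution of the SLMFG~\eqref{eq:SLMFG}. Then there is a sequence $(\control^{\leader}_{k},\control^{\Follower}_{k}) \to (\bar{\control}^{\leader},\bar{\control}^{\Follower})$ with $\control^{\leader}_{k} \in \Constraint^{\leader}$, $\control^{\Follower}_{k} \in \NEP(\control^{\leader}_{k})$, and $\criterion^{\leader}(\control^{\leader}_{k},\control^{\Follower}_{k}) < \criterion^{\leader}(\bar{\control}^{\leader},\bar{\control}^{\Follower})$. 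As before, Slater's CQ persists on a neighbourhood of $\bar{\control}^{\leader}$, so for large $k$ each multiplier set $\Mult^{\follower}(\control^{\leader}_{k},\control^{\Follower}_{k})$ is nonempty, and by Slater's CQ it is also bounded, hence a polytope. I would therefore select $\mult^{\follower}_{k}$ to be a \emph{vertex} of $\Mult^{\follower}(\control^{\leader}_{k},\control^{\Follower}_{k})$ for each $\follower$; since a vertex of the product $\Mult^{\Follower} = \prod_{\follower} \Mult^{\follower}$ is the product of vertices of the factors, the resulting $\mult^{\Follower}_{k}$ is a vertex of $\Mult^{\Follower}(\control^{\leader}_{k},\control^{\Follower}_{k})$ and $(\control^{\leader}_{k},\control^{\Follower}_{k},\mult^{\Follower}_{k})$ is feasible for~\eqref{eq:MPCC_reformulation}.

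Next I would stabilise the combinatorial type of these vertices, which is where the simplex-method idea enters. A point of $\Mult^{\follower}(\control^{\leader}_{k},\control^{\Follower}_{k})$ is a vertex precisely when the gradients $\{\grad_{\control^{\follower}} \confun^{\follower}_{j}(\control^{\leader}_{k},\control^{\follower}_{k})\}$ indexed by the support $B^{\follower}_{k} := \{j : (\mult^{\follower}_{k})_{j} > 0\}$ are linearly independent; by complementarity this support lies in the active set, and by continuity of $\confun^{\follower}$ the active set shrinks, $\Active^{\follower}(\control^{\leader}_{k},\control^{\follower}_{k}) \subseteq \Active^{\follower}(\bar{\control}^{\leader},\bar{\control}^{\follower})$ for large $k$. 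Since there are only finitely many index subsets of $\{1,\dots,p^{\follower}\}$ and finitely many followers, I would pass to a common subsequence along which $B^{\follower}_{k} \equiv B^{\follower}$ is constant for every $\follower$, with $B^{\follower} \subseteq \Active^{\follower}(\bar{\control}^{\leader},\bar{\control}^{\follower})$.

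Now CRCQ does the decisive work in two ways. First, the gradients $(\grad_{\control^{\follower}} \confun^{\follower}_{j})_{j \in B^{\follower}}$ have full column rank $|B^{\follower}|$ at each $(\control^{\leader}_{k},\control^{\follower}_{k})$, and since $B^{\follower} \subseteq \Active^{\follower}(\bar{\control}^{\leader},\bar{\control}^{\follower})$, CRCQ forces this rank to remain constant on a neighbourhood; hence the gradients stay linearly independent in the limit at $(\bar{\control}^{\leader},\bar{\control}^{\follower})$. Second, the vertex $\mult^{\follower}_{k}$ is the unique solution of the full-column-rank linear system $\sum_{j \in B^{\follower}} (\mult^{\follower}_{k})_{j}\, \grad_{\control^{\follower}} \confun^{\follower}_{j}(\control^{\leader}_{k},\control^{\follower}_{k}) = -\grad_{\control^{\follower}} \criterion^{\follower}(\control^{\leader}_{k},\control^{\follower}_{k},\control^{-\follower}_{k})$, whose solution depends continuously on the (continuous) data through the normal-equation formula when the rank is locally constant. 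Consequently $\mult^{\follower}_{k} \to \bar{\mult}^{\follower}$, where $\bar{\mult}^{\follower} \geq 0$ solves the same system at $(\bar{\control}^{\leader},\bar{\control}^{\follower})$, has support contained in $B^{\follower}$, and---because the corresponding gradients are linearly independent there---is itself a vertex of $\Mult^{\follower}(\bar{\control}^{\leader},\bar{\control}^{\Follower})$. Thus $\bar{\mult}^{\Follower}$ is a vertex of $\Mult^{\Follower}(\bar{\control}^{\leader},\bar{\control}^{\Follower})$, the feasible sequence $(\control^{\leader}_{k},\control^{\Follower}_{k},\mult^{\Follower}_{k})$ converges to $(\bar{\control}^{\leader},\bar{\control}^{\Follower},\bar{\mult}^{\Follower})$ with strictly smaller leader objective, and the assumed vertexwise local minimality of the MPCC is contradicted.

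The main obstacle I anticipate is exactly the passage in the last paragraph: upgrading the qualitative upper semicontinuity of $\Mult^{\follower}(\cdot)$ used in Theorem~\ref{thm:MPCC->SLMFG-local} to the quantitative statement that a vertex-valued selection converges to a vertex. This needs CRCQ in both of its roles---preservation of linear independence under the limit and continuity of the basic solution of a constant-rank linear system---and it is the verification that the limiting support gradients remain linearly independent, so that $\bar{\mult}^{\Follower}$ genuinely lands on a vertex rather than in the relative interior of a higher-dimensional face, that I expect to demand the most care.
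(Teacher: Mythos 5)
Your proof is correct, and it follows the same overall skeleton as the paper's: argue by contradiction with a minimizing sequence, note that Slater's CQ persists near $\bar{\control}^{\leader}$, select a \emph{vertex} multiplier at each point of the sequence, stabilize the combinatorial data by passing to a subsequence, show the limiting multiplier is again a vertex, and contradict the assumed vertexwise local minimality of the MPCC. Where you genuinely diverge is in the execution of the key step, namely that vertexhood survives the limit. The paper invokes Robinson's upper semicontinuity of the multiplier map to obtain an accumulation point, then performs a simplex-style reduction: it deletes redundant constraints to obtain a full-rank system, splits the multiplier into basic and nonbasic parts, fixes the basis along a subsequence, and \emph{asserts} that the limit inherits this basis structure and is therefore a vertex --- leaving implicit exactly why the basic columns remain linearly independent at the limit. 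You instead work with the extreme-point characterization (vertex $\Leftrightarrow$ the gradients indexed by the support are linearly independent), fix the support by finiteness, and then let CRCQ do two explicit jobs: since the support is contained in $\Active^{\follower}(\bar{\control}^{\leader},\bar{\control}^{\follower})$, the constant-rank property forces the support gradients to stay linearly independent at the limit, and the resulting full-column-rank linear system has a solution depending continuously on its data, upgrading mere accumulation to genuine convergence of $\mult^{\follower}_{k}$. You also handle correctly the subtlety that components of the limit may vanish, so its support can shrink while remaining inside a linearly independent family. What each approach buys: the paper's is shorter because it outsources compactness and closedness to Robinson's theorem; yours is self-contained, dispenses with that external result, and makes precise the exact point at which CRCQ is indispensable --- which is precisely the assertion the paper's own proof glosses over.
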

\begin{proof}
  Assume to the contrary that~$(\bar{\control}^{\leader},\bar{\control}^{\Follower})$ is not a local minimum of the problem SLMFG~\eqref{eq:SLMFG}.
  Then there exists a sequence~$\{(\control^{\leader}_{k},\control^{\Follower}_{k}j)\}$ such that
  \begin{subequations}\label{eq:sequence_construction_CRCQ}
    \begin{alignat}{2}
      &(\control^{\leader}_{k}, \control^{\Follower}_{k}) \to (\bar{\control}^{\leader}, \bar{\control}^{\Follower}), \label{eq:convergent} \\
      &\control^{\leader}_{k} \in \Constraint^{\leader}, \quad (\control^{\Follower}_{k}) \in \NEP(\control^{\leader}_{k}), \quad  \quad &&(\forall k = 1,2,\dots) \\
      &\Active^{\follower}(\control^{\leader}_{k},\control^{\follower}_{k}) \subset \Active^{\follower}(\bar{\control}^{\leader}, \bar{\control}^{\follower}) \quad && (\forall k = 1,2,\dots) \\
      &\criterion^{\leader}(\control^{\leader}_{k}, \control^{\Follower}_{k}) < \criterion^{\leader} (\bar{\control}^{\leader}, \bar{\control}^{\Follower}) && (\forall k = 1,2,\dots). \label{eq:lessthan2}
      \intertext{and that both of the following are constants over all~$k = 1,2,\dots$:}
      &\Active^{\follower}(\control^{\leader}_{k},\control^{\follower}_{k}) =: \mathcal{M}, \\
      &\mathrm{rank}(\grad_{\control^{\follower}} \confun^{\follower}_{j}(\control^{\leader}_{k},\control^{\follower}_{k}))_{j \in \Active^{\follower}(\control^{\leader}_{k},\control^{\follower}_{k})} =: \rho^{\follower}.
    \end{alignat}
  \end{subequations}
  Putting~$M := \abs{\mathcal{M}}$, we shall enumerate the active set as~$\Active^{\follower}(\control^{\leader}_{k},\control^{\follower}_{k}) = \{j_{1},\dots,j_{M}\}$.

  Consider any follower~$\follower$'s problem.
  Knowing that the Slater's CQ is satisfied at~$\control^{\leader} = \bar{\control}^{\leader}$, we can assert that it is also satisfied at a neighborhood about~$\bar{\control}^{\leader}$.
  We assume, without loss of generality, that~$\{\control^{\leader}_{k}\}$ belongs to this neighborhood.
  Now, for each~$(\control^{\leader}_{k},\control^{\Follower}_{k})$ in the sequence in~\eqref{eq:sequence_construction_CRCQ}, there exists a vertex~$\mult^{\follower}_{k} \in \Mult^{\follower}(\control^{\leader}_{k}, \control^{\Follower}_{k})$ that forms a sequence~$\{\mult^{\follower}_{k}\}$ having an accumulation point~$\bar{\mult}^{\follower} \in \Mult^{\follower}(\bar{\control}^{\leader}, \bar{\control}^{\Follower})$ by the upper semicontinuity of the Lagrange multiplier map~\cite{Robinson1982}.
  It remains to show that~$\bar{\mult}^{\follower}$ is a vertex of~$\Mult^{\follower}(\bar{\control}^{\leader},\bar{\control}^{\Follower})$.

  Given that the rank of the matrix~$\grad_{\control^{\follower}} \confun^{\follower}_{j}(\control^{\leader}_{k},\control^{\follower}_{k}))_{j \in \Active^{\follower}(\control^{\leader}_{k},\control^{\follower}_{k})}$ equals~$\rho^{\follower}$, there are~$(M - \rho^{\follower})$ redundant constraints among the total of~$M$ constraints.
  On removing those constraints, we have a resulting matrix of size~$\rho^{\follower} \times M$.
  Let us denote such a matrix with~$A^{\follower}$.
  Also denote with~$\mult^{\follower}_{k}|_{\Active^{\follower}(\control^{\leader}_{k})} := (\mult^{\follower}_{k,j_{1}},\cdots,\mult^{\follower}_{k,j_{M}})$ the components in~$\Active^{\follower}(\control^{\leader}_{k},\control^{\Follower}_{k})$ of the vector~$\mult^{\follower}_{k}$, and write~$b^{\follower} := -\grad_{\control^{\follower}} \criterion^{\follower}(\control^{\leader}_{k},\control^{\follower}_{k},\control^{-\follower}_{k})$.
  Then the following conditions hold:
  \begin{subequations}
    \begin{alignat}{2}
      A^{\follower} \mult^{\follower}_{k}|_{\Active^{\follower}} &= b^{\follower}, \quad \label{eq:linsys_CRCQ}\\
      \mult^{\follower}_{k,j} &\geq 0, \quad && (\forall j \in \Active^{\follower}(\control^{\leader}_{k},\control^{\follower}_{k})), \\
      \mult^{\follower}_{k,j} &= 0, \quad &&(\forall j \not\in \Active^{\follower}(\control^{\leader}_{k},\control^{\follower}_{k})).
    \end{alignat}
  \end{subequations}
  In fact, the left-hand-side of the linear system~\eqref{eq:linsys_CRCQ} could be decomposed into
  \[
    B^{\follower}\mult^{\follower}_{k}|_{\Active^{\follower}}^{B} + N^{\follower}\mult^{\follower}_{k}|_{\Active^{\follower}}^{N},
  \]
  where~$\mult^{\follower}_{k}|_{\Active^{\follower}}^{B}$ and~$\mult^{\follower}_{k}|_{\Active^{\follower}}^{N}$ correspond, respectively, to the basic and nonbasic parts of~$\mult^{\follower}_{k}|_{\Active^{\follower}}$.
  Similarly,~$B^{\follower}$ and~$N^{\follower}$ above refer to the corresponding columns to such basic and nonbasic parts.
  Now given that the number of nonbasic variables equals the constant~$M-\rho^{\follower}_{k}$, we may restrict to a subsequence of~$\{\mult^{\follower}_{k}\}$, also denote with~$\{\mult^{\follower}_{k}\}$ for convenience, such that the corresponding sets of basic part are identical for all~$k \in \N$.
  The set of basic and nonbasic variables thus remain the same also for the accumulation point~$\bar{\mult}^{\follower}$, which implies that~$\bar{\mult}^{\follower}$ is a vertex point of~$\Active^{\follower}(\bar{\control}^{\leader}, \bar{\control}^{\Follower})$. Since the choice of~$\follower \in \Follower$ was arbitrary, it justifies that~$(\control^{\leader}_{k},\control^{\Follower}_{k},\mult^{\Follower}_{k})$ is a feasible point of the MPCC~\eqref{eq:MPCC_reformulation} with~$\mult^{\follower}_{k}$ being a vertex point of~$\Mult^{\follower}(\control^{\leader}_{k},\control^{\follower}_{k})$ for all~$k \in \N$ and all~$\follower \in \Follower$.
  Together with~\eqref{eq:convergent} and~\eqref{eq:lessthan2}, this is a contradiction with the hypothesis that~$(\bar{\control}^{\leader}, \bar{\control}^{\Follower}, \bar{\mult}^{\Follower})$ is a local solution of the MPCC~\eqref{eq:MPCC_reformulation}.
  We therefore conclude that~$(\bar{\control}^{\leader}, \bar{\control}^{\Follower})$ is a local solution of SLMFG~\eqref{eq:SLMFG}.
\end{proof}

The following example shows tha the CRCQ assumption is crucial.
In particular, it illustrates that the local solutions of the SLMFG~\eqref{eq:SLMFG} and the MPCC reformulation~\eqref{eq:MPCC_reformulation} need not be the same if the follower's problem does not satisfy the CRCQ at $(\bar{\control}^{\leader}, \bar{\control}^{\Follower})$.
\begin{example}
  Consider a SLMFG with two followers, \emph{i.e.}~$\Follower = \{\follower_{1},\follower_{2}\}$, with~$\Control^{\leader} = \R$ and~$\Control^{\follower_{1}} = \Control^{\follower_{2}} = \R^{2}$.
  Suppose that a follower~$\follower_{i}$ solves, given~$\control^{\leader} \in \Control^{\leader}$, the following problem
  \begin{subequations}\label{eq:follower_exmp_no_crcq}
    \begin{empheq}[left=\empheqlbrace]{align}
      \min_{\control^{\follower_{i}}_{1},\control^{\follower_{i}}_{2}} \quad 
        & \sum_{j=1}^{2} [ (\control^{\follower_{j}}_{1})^{2} + (\control^{\follower_{j}}_{2} + 1)^{2}] \\
        \text{s.t.} \quad
        & (\control^{\follower_{i}}_{1} - \control^{\leader})^{2} + (\control^{\follower_{i}}_{2} - \control^{\leader} - 1)^{2} - 1 \leq 0 \\
        & (\control^{\follower_{i}}_{1} + \control^{\leader})^{2} + (\control^{\follower_{i}}_{2} - \control^{\leader} - 1)^{2} - 1 \leq 0.
    \end{empheq}
  \end{subequations}
  Now, let the leader~$\leader$'s problem be
  \begin{subequations}\label{eq:leader_exmp_no_crcq}
    \begin{empheq}[left=\empheqlbrace]{align}
      \min_{\control^{\leader},\control^{\Follower}} \quad
        & - \control^{\follower_{1}}_{2} - \control^{\follower_{2}}_{2} \\
        \text{s.t.} \quad
        & 0 \leq \control^{\leader} \leq \frac{1}{2} \\
        & \control^{\Follower} = (\control^{\follower_{1}},\control^{\follower}_{2}) \in \NEP(\control^{\leader}),
    \end{empheq}
  \end{subequations}
  where~$\NEP(\control^{\leader})$ denotes the set of Nash equilibria of~\eqref{eq:follower_exmp_no_crcq}.
  Indeed, replacing the~$\NEP(\control^{\leader})$ with its concatenated KKT conditions, one ends up with the MPCC reformulation of~\eqref{eq:leader_exmp_no_crcq} as follows
  \begin{subequations}\label{eq:exmp_crcq_MPCC}\allowdisplaybreaks
    \begin{empheq}[left=\empheqlbrace]{alignat=2}
      \min_{\control^{\leader},\control^{\Follower}} \quad
        & - \control^{\follower_{1}}_{2} - \control^{\follower_{2}}_{2} \\
        \text{s.t.} \quad
        & 0 \leq \control^{\leader} \leq \frac{1}{2} \\
        & \forall i = 1,2: \\
        &  \quad 2\control^{\follower_{i}}_{1} + \mult^{\follower_{i}}_{1}[2(\control^{\follower_{i}}_{1} - \control^{\leader})] + \mult^{\follower_{i}}_{2}[2(\control^{\follower_{i}}_{1} + \control^{\leader})] = 0   \label{eq:exmp_mpcc_crcq_key2}  \\
        &  \quad 2(\control^{\follower_{i}}_{2} + 1) + \mult^{\follower_{i}}_{1}[2(\control^{\follower_{i}}_{2} - \control^{\leader} - 1)] + \mult^{\follower_{i}}_{2}[2(\control^{\follower_{i}}_{2} - \control^{\leader} - 1)] = 0   \label{eq:exmp_mpcc_crcq_key1}\\
        &  \quad \mult^{\follower_{i}}_{1}[(\control^{\follower_{i}}_{1} - \control^{\leader})^{2} + (\control^{\follower_{i}}_{2} - \control^{\leader} - 1)^{2} - 1] = 0  \\
        &  \quad \mult^{\follower_{i}}_{2}[(\control^{\follower_{i}}_{1} + \control^{\leader})^{2} + (\control^{\follower_{i}}_{2} - \control^{\leader} - 1)^{2} - 1] = 0  \\
        &  \quad \mult^{\follower_{i}}_{1} \geq 0, \quad (\control^{\follower_{i}}_{1} - \control^{\leader})^{2} + (\control^{\follower_{i}}_{2} - \control^{\leader} - 1)^{2} - 1 \leq 0  \\
        &  \quad \mult^{\follower_{i}}_{2} \geq 0, \quad (\control^{\follower_{i}}_{1} + \control^{\leader})^{2} + (\control^{\follower_{i}}_{2} - \control^{\leader} - 1)^{2} - 1 \leq 0.
    \end{empheq}
  \end{subequations}
  We take~$\bar{\control}^{\leader} = 0$ and~$\bar{\control}^{\follower_{1}} = \bar{\control}^{\follower_{2}} = (0,0)$.
  Then~$(\bar{\control}^{\leader},\bar{\control}^{\follower_{1}},\bar{\control}^{\follower_{2}}) = (0,(0,0),(0,0))$ is feasible for the SLMFG~\eqref{eq:leader_exmp_no_crcq}, but not optimal.
  Looking at~\eqref{eq:exmp_mpcc_crcq_key1}, one obtain the set of multipliers as
  \begin{equation}\label{eq:exmp_mpcc_crcq_Mult}
    \Mult^{\follower_{i}} (\bar{\control}^{\leader},\bar{\control}^{\follower_{1}},\bar{\control}^{\follower_{2}}) = \{ (\mult^{\follower_{i}}_{1},\mult^{\follower_{i}}_{2}) \in \R_{+}^{2} \mid \mult^{\follower_{i}}_{1} + \mult^{\follower_{i}}_{2} = 1 \}.
  \end{equation}

  On the other hand, let~$(\hat{\control}^{\leader},\hat{\control}^{\follower_{1}},\hat{\control}^{\follower_{2}})$ be any feasible point of~\eqref{eq:leader_exmp_no_crcq} with~$\hat{\control}^{\leader} > 0$.
  Then one has~$\hat{\control}^{\follower_{1}}_{1} = \hat{\control}^{\follower_{2}}_{1} = 0$, and~$\hat{\control}^{\follower_{1}},\hat{\control}^{\follower_{2}} \neq 0$.
  Hence, the objective value at such a feasible point is strictly negative.
  We derive, using~\eqref{eq:exmp_mpcc_crcq_key2}, that the corresponding multipliers~$(\hat{\mult}^{\follower_{i}}_{1},\hat{\mult}^{\follower_{i}}_{2}) \in \Mult^{\follower_{i}}(\hat{\control}^{\leader},\hat{\control}^{\follower_{1}},\hat{\control}^{\follower_{2}})$ satisfy~$\mult^{\follower_{i}}_{1} = \mult^{\follower_{i}}_{2} =: \hat{\mult}_{i}$ for~$i=1,2$.
  The condition~\eqref{eq:exmp_mpcc_crcq_key1} implies that
  \begin{equation}\label{eq:exmp_mpcc_crcq_mult_form}
    \hat{\mult}_{i} = -\frac{1}{2} \cdot \frac{\hat{\control}^{\follower_{i}}_{2} + 1}{\hat{\control}^{\follower_{i}}_{2} - \hat{\control}^{\leader} - 1} > 0.
  \end{equation}
  Now, take a sequence~$(\control^{\leader,k},\control^{\follower_{1},k},\control^{\follower_{2},k},\mult^{\follower_{1},k},\mult^{\follower_{2},k})_{k \in \N}$ such that
  \begin{alignat*}{2}
    & \text{$\control^{\leader,k} > 0$ and $(\control^{\leader,k}, \control^{\follower_{1},k}, \control^{\follower_{2},k}) \in \Control^{\leader} \times \Control^{\follower_{1}} \times \Control^{\follower_{2}}$ is feasible,} \quad && \forall k \in \N, \\
    & \mult^{\follower_{i},k} = (\mult^{\follower_{i},k}_{1}, \mult^{\follower_{i},k}_{2}) \in \Mult^{\follower_{i}}(\control^{\leader,k}, \control^{\follower_{1},k}, \control^{\follower_{2},k}), \quad && \forall k \in \N, \\
    & (\control^{\leader,k}, \control^{\follower_{1},k}, \control^{\follower_{2},k}) \to (\bar{\control}^{\leader}, \bar{\control}^{\follower_{1}}, \bar{\control}^{\follower_{2}}) = (0, (0,0), (0,0)).
  \end{alignat*}
  Knowing that~$\mult^{\follower_{i},k}_{1} = \mult^{\follower_{i},}_{2} = \hat{\mult}_{i}$ for~$i=1,2$, with the formula~\eqref{eq:exmp_mpcc_crcq_mult_form}, one obtain that
  \[
    \mult^{\follower_{i},k} \to \bar{\mult}_{i} := \left(\frac{1}{2},\frac{1}{2}\right) \in \Mult^{\follower_{i}}(\bar{\control}^{\leader},\bar{\control}^{\follower_{1}},\bar{\control}^{\follower_{2}}),
  \]
  following the expression~\eqref{eq:exmp_mpcc_crcq_Mult}.
  This actually proves that~$(\bar{\control}^{\leader},\bar{\control}^{\follower_{1}},\bar{\control}^{\follower_{2}},\bar{\mult}_{1},\bar{\mult}_{2})$, which is feasible for the MPCC~\eqref{eq:exmp_crcq_MPCC}, is not an optimal point for~\eqref{eq:exmp_crcq_MPCC}.
  However, the point~$(\bar{\control}^{\leader},\bar{\control}^{\follower_{1}},\bar{\control}^{\follower_{2}},\bar{\mult}^{\follower_{1}},\bar{\mult}^{\follower_{2}})$, with~$\bar{\mult}^{\follower_{i}} \in \Mult^{\follower_{i}}(\bar{\control}^{\leader},\bar{\control}^{\follower_{1}},\bar{\control}^{\follower_{2}}) \setminus \{\bar{\mult}_{i}\}$ for~$i=1,2$, is optimal for~\eqref{eq:exmp_crcq_MPCC}.

  We have thus shown that the MPCC reformulation of a SLMFG may have a local solution that is not (locally) optimal for the original SLMFG problem, as pointed out with the above procedure.
  This is because the followers' problems do not satisfy the CRCQ condition at~$(\bar{\control}^{\leader}, \bar{\control}^{\follower_{1}}, \bar{\control}^{\follower_{2}}) = (0,(0,0),(0,0))$.
  Indeed, it could be observed from the following arguments:
  \begin{itemize}[label=$\circ$, leftmargin=*]
    \item The set of active constraints for the follower~$\follower_{i}$~($i=1,2$) at~$(\bar{\control}^{\leader}, \bar{\control}^{\follower_{1}}, \bar{\control}^{\follower_{2}})$ is~$\Active^{\follower_{i}} = \{1,2\}$.
    \item The matrix
      \[
        (\grad_{\control^{\follower_{i}}} \confun_{j}^{\follower_{i}}(\bar{\control}^{\leader},\bar{\control}^{\follower_{i}}))_{j \in \Active^{\follower_{i}}} = \begin{pmatrix} \phantom{-}0 & \phantom{-}0\phantom{,} \\ -2 & -2\phantom{,} \end{pmatrix}
      \]
      has rank equals to~$1$.
    \item At any feasible point~$(\hat{\control}^{\leader},\hat{\control}^{\follower_{1}},\hat{\control}^{\follower_{2}})$ with~$\hat{\control}^{\leader} > 0$, the matrix
      \[
        (\grad_{\control^{\follower_{i}}} \confun^{\follower_{i}}_{j} (\hat{\control}^{\leader},\hat{\control}^{\follower_{i}}))_{j \in \Active^{\follower_{i}}} = \begin{pmatrix} 2(\hat{\control}^{\follower_{i}}_{1} - \hat{\control}^{\leader}) & 2(\hat{\control}^{\follower_{i}}_{1} + \hat{\control}^{\leader}) \\ 2(\hat{\control}^{\follower_{i}}_{2} - \hat{\control}^{\leader} - 1) & 2(\hat{\control}^{\follower_{i}}_{2} - \hat{\control}^{\leader} - 1) \end{pmatrix}
      \]
      has rank equals to~$2$.
  \end{itemize}
\end{example}

\section{Multiple followers with shared constraints}\label{sec:GNEP-lower}

The results presented in the previous section deal with SLMFGs where the followers solve a Nash equilibrium problem.
This feature means that the choice of each follower is constrained only to himself (and the single leader) and not the other followers.
This can be too restrictive in some practical situations, and deems that the choice of a follower could as well be constrained by the other followers, \emph{i.e.} to consider a \emph{generalized Nash equilibrium problem} (briefly \emph{GNEP}).

In this section, we partially answer such a call by showing that a GNEP with \emph{shared constraints} (in the spirit of~\citet{zbMATH03229265}) can be equivalently reformulated as a classical NEP under joint convexity assumption.
As a consequence, all the results presented in Section~\ref{sec:main} are then applicable to the SLMFG with Rosen's GNEP lower-level.
This situation is prevalent in a decision problem with a limited availability of resources, \emph{e.g.} followers making purchases on a limited amount of bananas and coconuts in a market to make poached banana in coconut milk.

To be accurate, let us fix, for each~$\control^{\leader} \in \Control^{\leader}$, a new constraint set~$\shared{\Constraint}^{\Follower}(\control^{\leader}) \subset \Control^{\Follower}$ with
\begin{subequations}\label{eq:follower_GNEP_shared_constraint}
  \begin{alignat}{3}
    & \shared{\Constraint}^{\Follower}(\control^{\leader}) = \{\control^{\Follower} \in \Control^{\Follower} \mid \shared{\confun}^{\Follower}_{j}(\control^{\leader},\control^{\Follower}) \leq 0 \quad &&(\forall j = 1,\dots,\shared{p}^{\Follower})\} \\
    & \shared{\confun}^{\Follower}_{j} : \Control^{\leader} \times \Control^{\Follower} \to \R &&(\forall j = 1,\dots,\shared{p}^{\Follower}).
  \end{alignat}
\end{subequations}
We also introduce the notation 
\[
  \shared{\Constraint}^{\follower}(\control^{\leader},\control^{-\follower}) := \{\control^{\follower} \in \Control^{\follower} \mid (\control^{\follower},\control^{-\follower}) \in \shared{\Constraint}^{\Follower}(\control^{\leader})\} \subset \Control^{\follower}
\]
for any~$\follower \in \Follower$,~$\control^{\leader} \in \Control^{\leader}$ and~$\control^{-\follower} \in \Control^{-\follower}$.
Additionally, we suppose for each $\follower \in \Follower$ that the objective function $\criterion^{\follower}$ is independent of $\control^{-\follower}$, \emph{i.e.}, $\criterion^{\follower} : \Control^{\leader} \times \Control^{\follower} \to \R$.
The followers' problems, given a signal $\control^{\leader} \in \Control^{\leader}$, becomes: Find a vector $\control^{\Follower} \in \Control^{\Follower}$ such that for each $\follower \in \Follower$, the element $\control^{\follower}$ solves the following optimization
\begin{subequations}\label{eq:follower_GNEP}
  \begin{empheq}[left=\empheqlbrace]{align}
    \min_{\hat{\control}^{\follower}} \quad & \criterion^{\follower}(\control^{\leader},\hat{\control}^{\follower}) \\
    \text{s.t.} \quad & \hat{\control}^{\follower} \in \Constraint^{\follower}(\control^{\leader}) \\
                      & \hat{\control}^{\follower} \in \shared{\Constraint}^{\follower}(\control^{\leader},\control^{-\follower}).
  \end{empheq}
\end{subequations}
This turns the followers' problems into a $\control^{\leader}$-parametrized \emph{Rosen's generalized Nash equilibrium problem} (briefly, \emph{RGNEP}), and we refer to the solution set of \eqref{eq:follower_GNEP} with $\RGNEP(\control^{\leader})$.

Now, let us show that lower-level RGNEP could be reformulated with a NEP.
\begin{theorem}\label{thm:GNEP-Opt-Rosen}
  The RGNEP given in \eqref{eq:follower_GNEP} can be reformulated into an optimization problem with the same global solution set, provided that $\bar{\confun}^{\Follower}_{j}$ is jointly convex.
\end{theorem}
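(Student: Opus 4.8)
The plan is to construct an explicit single optimization problem whose global minimizers coincide, for each fixed $\control^{\leader}$, with $\RGNEP(\control^{\leader})$, exploiting the fact that the separability hypothesis ($\criterion^{\follower}$ independent of $\control^{-\follower}$) turns the aggregate criterion into an exact potential for the followers' game. Applying $\grad_{\control^{\follower}}$ to the sum of all criteria returns $\grad_{\control^{\follower}} \criterion^{\follower}$ for every $\follower$, the remaining summands being independent of $\control^{\follower}$; I would therefore take the reformulated problem to be
\begin{equation*}
  \min_{\control^{\Follower}}\ \sum_{\follower \in \Follower} \criterion^{\follower}(\control^{\leader},\control^{\follower}) \quad \text{subject to}\quad \control^{\follower} \in \Constraint^{\follower}(\control^{\leader})\ (\forall \follower \in \Follower),\quad \shared{\confun}^{\Follower}_{j}(\control^{\leader},\control^{\Follower}) \leq 0 \ (\forall j = 1,\dots,\shared{p}^{\Follower}).
\end{equation*}
The separable structure is what lets a single objective encode every follower's first-order behaviour simultaneously.

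First I would verify that this is a bona fide convex program: by hypothesis each $\shared{\confun}^{\Follower}_{j}$ is jointly convex and each $\confun^{\follower}_{j}$ is convex in $\control^{\follower}$, so the feasible region is convex, while the objective is convex as a sum of the individually convex criteria. Assuming Slater's CQ for the aggregated constraint system, the KKT conditions are then necessary and sufficient for global optimality. Writing them out and using separability, the stationarity part splits into one block per follower,
\begin{equation*}
  \grad_{\control^{\follower}} \criterion^{\follower}(\control^{\leader},\control^{\follower}) + \sum_{j=1}^{p^{\follower}} \mult^{\follower}_{j}\, \grad_{\control^{\follower}} \confun^{\follower}_{j}(\control^{\leader},\control^{\follower}) + \sum_{j=1}^{\shared{p}^{\Follower}} \sigma_{j}\, \grad_{\control^{\follower}} \shared{\confun}^{\Follower}_{j}(\control^{\leader},\control^{\Follower}) = 0,
\end{equation*}
where the prices $\sigma_{j}$ attached to the shared constraints are forced to be common across all followers.

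Next I would match this aggregated KKT system against the equilibrium condition defining $\RGNEP(\control^{\leader})$. For the inclusion into $\RGNEP(\control^{\leader})$, I would read each block of the system in isolation: it is exactly the KKT characterization of follower $\follower$'s parametric problem~\eqref{eq:follower_GNEP} evaluated at the candidate point, and since that follower's problem is convex with Slater's CQ, the stationarity conditions upgrade to global optimality of $\control^{\follower}$ against $\control^{-\follower}$, placing the point in $\RGNEP(\control^{\leader})$. For the converse I would start from a solution of the RGNEP, invoke Slater's CQ follower by follower to obtain multipliers for each follower's shared constraints, and then use joint convexity to reconcile these into a single common price vector $\sigma$ satisfying every block at once, thereby returning the point to the KKT system of the reformulated program and hence to its global solution set.

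The hard part will be precisely this reconciliation of multipliers: a priori each follower may certify the same shared constraint with a different Lagrange multiplier, whereas the single optimization problem admits only one common price $\sigma$. Joint convexity of $\shared{\confun}^{\Follower}$ is exactly the hypothesis that makes the reconciliation legitimate---it is the setting of Rosen's \emph{normalized} equilibrium, in which the shared constraints carry a single price common to all players and the equilibrium coincides with the variational solution of the associated convex program. I would therefore argue the equivalence at the level of normalized (variational) equilibria, and would be careful throughout to require Slater's CQ for the aggregated system, and not merely follower by follower, so that the passage between KKT stationarity and global optimality is valid on both sides of the claimed equality.
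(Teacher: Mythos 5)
Your reformulated problem is exactly the one the paper builds in its proof (minimize the aggregate criterion $\tilde{\criterion}^{\Follower}(\control^{\leader},\control^{\Follower}) = \sum_{\follower\in\Follower}\criterion^{\follower}(\control^{\leader},\control^{\follower})$ over $\Constraint^{\Follower}(\control^{\leader}) \cap \shared{\Constraint}^{\Follower}(\control^{\leader})$), and your argument for the inclusion $\widetilde{\Opt}(\control^{\leader}) \subset \RGNEP(\control^{\leader})$ is sound, though heavier than necessary: the paper gets it with no constraint qualification at all, since a joint minimizer of a separable sum automatically minimizes each summand over the corresponding slice. The genuine gap is precisely the step you flag as ``the hard part,'' and your proposed fix does not work. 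Rosen's theory \cite{zbMATH03229265} asserts that normalized equilibria---those certified by a \emph{common} multiplier vector on the shared constraints---exist and are equilibria; it does \emph{not} assert that every equilibrium is normalized, and in general this is false. Joint convexity of $\shared{\confun}^{\Follower}_{j}$ buys the first statement, not the second. Consequently your retreat to ``arguing the equivalence at the level of normalized (variational) equilibria'' proves a different theorem: it identifies $\widetilde{\Opt}(\control^{\leader})$ with the set of normalized equilibria, which is in general a \emph{proper} subset of $\RGNEP(\control^{\leader})$, whereas the statement claims equality with the full RGNEP solution set.

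To see that the reconciliation of multipliers is genuinely impossible under the stated hypotheses, take two followers with scalar variables, $\criterion^{\follower_{1}}(\control^{\follower_{1}}) = (\control^{\follower_{1}}-1)^{2}$, $\criterion^{\follower_{2}}(\control^{\follower_{2}}) = (\control^{\follower_{2}}-1)^{2}$, no individual constraints, and the single jointly convex shared constraint $(\control^{\follower_{1}})^{2} + (\control^{\follower_{2}})^{2} - 1 \leq 0$. Every point $(\cos\theta,\sin\theta)$ with $\theta \in [0,\pi/2]$ lies in $\RGNEP$: given $\control^{\follower_{2}}=\sin\theta$, follower $\follower_{1}$'s feasible slice is $[-\cos\theta,\cos\theta]$ and his best response is $\cos\theta$, and symmetrically for $\follower_{2}$. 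Yet the aggregated problem has the unique solution $(1/\sqrt{2},1/\sqrt{2})$. At an arc point with $\cos\theta \neq \sin\theta$, both in $(0,1)$, follower-wise Slater holds, Slater for the aggregated system holds (the origin is strictly feasible), and the followers' multipliers are $(1-\cos\theta)/\cos\theta$ and $(1-\sin\theta)/\sin\theta$, which differ; no common price $\sigma$ exists, exactly because the point is an equilibrium but not a normalized one. So the inclusion $\RGNEP(\control^{\leader}) \subset \widetilde{\Opt}(\control^{\leader})$ fails under the hypotheses of the theorem, and no argument---yours or otherwise---can close this gap without strengthening them. It is worth noting that the paper's own proof founders at the same spot: the sentence ``This is the same as saying'' passes from the follower-wise variational inequalities over the slices $\Constraint^{\follower}(\control^{\leader}) \cap \shared{\Constraint}^{\follower}(\control^{\leader},\control^{-\follower})$ to the joint variational inequality over $\Constraint^{\Follower}(\control^{\leader}) \cap \shared{\Constraint}^{\Follower}(\control^{\leader})$, but a jointly feasible $\hat{\control}^{\Follower}$ need not have $\hat{\control}^{\follower}$ feasible for follower $\follower$'s slice at the fixed $\control^{-\follower}$, so the summation argument breaks down---this unjustified leap is precisely the normalized-equilibrium assertion. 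You located the right pressure point; the error is in believing joint convexity alone can carry it.
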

\begin{proof}
  Let~$\control^{\leader} \in \Control^{\leader}$ be given.
  We define a function~$\tilde{\criterion}^{\Follower} : \Control^{\leader} \times \Control^{\Follower} \to \R$ and a constraint set~$\Constraint^{\Follower} \subset \Control^{\Follower}$ by
  \begin{align*}
    & \tilde{\criterion}^{\Follower} (\control^{\leader},\control^{\Follower}) := \sum_{\follower \in \Follower} \criterion^{\follower}(\control^{\leader}, \control^{\follower}) \\
    & \Constraint^{\Follower}(\control^{\leader}) := \prod_{\follower \in \Follower} \Constraint^{\follower}(\control^{\leader}).
  \end{align*}
  We now consider the following~$\control^{\leader}$-parametrized optimization problem
  \begin{subequations}
    \begin{empheq}[left=\empheqlbrace]{align}
      \min_{\control^{\Follower}} \quad & \tilde{\criterion}^{\Follower}(\control^{\leader},\control^{\Follower}) \\
      \text{s.t.}\quad & \control^{\Follower} \in \Constraint^{\Follower}(\control^{\leader}) \\
                       & \control^{\Follower} \in \shared{\Constraint}^{\Follower}(\control^{\leader}),
    \end{empheq}
  \end{subequations}
  whose (global) solution set will be denoted with~$\widetilde{\Opt}(\control^{\leader})$.
  We would like to prove that~$\RGNEP(\control^{\leader}) = \widetilde{\Opt}(\control^{\leader})$.

  It is clear that~$\widetilde{\Opt}(\control^{\leader}) \subset \RGNEP(\control^{\leader})$, hence we focus on showing~$\RGNEP(\control^{\leader}) \subset \widetilde{\Opt}(\control^{\leader})$.
  Suppose that~$\control^{\Follower} \in \RGNEP(\control^{\leader})$.
  This is equivalent to saying that for any~$\follower \in \Follower$, there exists~$g^{\follower} \in \partial_{\control^{\follower}} \criterion^{\follower} (\control^{\leader},\control^{\follower})$ such that
  \[
    \langle g^{\follower} , \hat{\control}^{\follower} - \control^{\follower} \rangle \geq 0
  \]
  for every~$\hat{\control}^{\follower} \in \Constraint^{\follower}(\control^{\leader}) \cap \shared{\Constraint}^{\follower}(\control^{\leader},\control^{-\follower})$.
  This is the same as saying
  \[
    \langle \tilde{g}^{\Follower}, \hat{\control}^{\Follower} - \control^{\Follower} \rangle \geq 0
  \]
  for every~$\hat{\control}^{\Follower} \in \Constraint^{\Follower}(\control^{\leader}) \cap \shared{\Constraint}^{\Follower}(\control^{\leader})$, where
  \[
    \tilde{g}^{\Follower} = (g^{\follower_{1}},\dots,g^{\follower_{n}}) \in \prod_{\follower \in \Follower} \partial_{\control^{\follower}} \criterion^{\follower}(\control^{\leader},\control^{\follower}) = \prod_{\follower \in \Follower} \partial_{\control^{\follower}} \tilde{\criterion}^{\Follower}(\control^{\leader},\control^{\follower}) = \partial_{\control^{\Follower}} \tilde{\criterion}^{\Follower}(\control^{\leader},\control^{\Follower}).
  \]
  We have thus proved that~$\control^{\Follower} \in \widetilde{\Opt}(\control^{\leader})$.
\end{proof}

The above technique could also be adapted to a more general GNEP in which the followers are partitioned into groups, and players within each group have shared constraints only among them.
Let us develop some additional notations here. For any subset~$\Bgent \subset \Follower$, we write~$\Control^{\Bgent} = \prod_{\bgent \in \Bgent} \Control^{\bgent}$ and its generic element~$\control^{\Bgent} = (\control^{\bgent})_{\bgent \in \Bgent}$.
We also write~$\Control^{-\Bgent} = \prod_{\agent \in \Follower\setminus\Bgent}$ and its generic element~$\control^{-\Bgent} = (\control^{\agent})_{\agent \in \Follower\setminus\Bgent}$.
Any vector~$\control^{\Follower}$ can then be represented with~$\control^{\Follower} = (\control^{\Bgent},\control^{-\Bgent})$.

Formally, suppose that the set~$\Follower$ of followers is partitioned into~$\Follower_{1},\dots,\Follower_{m}$, each of them are mutually disjoint.
For each~$i=1,\dots,m$ and each~$\control^{\leader}$, we consider the following shared constraint set
\begin{subequations}
  \begin{alignat}{2}
    & \bar{\Constraint}^{\Follower_{i}}(\control^{\leader}) = \{ \control^{\Follower_{i}} = (\control^{\follower})_{\follower \in \Follower_{i}} \mid \bar{\confun}^{\Follower_{i}}_{j}(\control^{\leader},\control^{\Follower_{i}}) \leq 0 \quad && (\forall j = 1,\dots,\bar{p}^{\Follower_{i}})\} \\
    & \bar{\confun}^{\Follower_{i}}_{j} : \Control^{\leader} \times \Control^{\Follower_{i}} \to \R && (\forall j=1,\dots,\bar{p}^{\Follower_{i}}).
  \end{alignat}
\end{subequations}
We focus on the situation that a follower's objective function is independent of the decision of the one within the same group.
This means, for each~$\follower \in \Follower_{i}$ ($i=1,\dots,m$), the objective is a real function~$\criterion^{\follower}$ is defined on~$\Control^{\leader} \times \Control^{\follower} \times \Control^{-\Follower_{i}} = \Control^{\leader} \times \Control^{\follower} \times \prod_{\gollower \in \Follower\setminus\Follower_{i}} \Control^{\gollower}$.
The projection of the above constraint system to each of the follower~$\follower \in \Follower_{i}$ is then defined as
\[
  \bar{\Constraint}^{\follower}(\control^{\leader},\control^{\Follower_{i}\setminus\{\follower\}}) = \{ \control^{\follower} \in \Control^{\follower} \mid (\control^{\follower},\control^{\Follower_{i}\setminus\{\follower\}}) \in \bar{\Constraint}^{\Follower_{i}} \}.
\]
For each~$\control^{\leader}$, the corresponding RGNEP of all the followers then have the following form: Find~$\control^{\Follower} \in \Control^{\Follower}$ such that for each~$i=1,\dots,m$ and each~$\follower \in \Follower_{i}$, the component~$\control^{\follower}$ solves
\begin{subequations}\label{eq:GNEP-lower-full}
  \begin{empheq}[left=\empheqlbrace]{align}
    \min_{\hat{\control}^{\follower}} \quad & \criterion^{\follower}(\control^{\leader},\hat{\control}^{\follower},\control^{-\Follower_{i}}) \\
    \text{s.t.} \quad & \hat{\control}^{\follower} \in \Constraint^{\follower}(\control^{\leader}) \\
                      & \hat{\control}^{\follower} \in \bar{\Constraint}^{\follower} (\control^{\leader},\control^{\Follower_{i}\setminus\{\follower\}})
  \end{empheq}
\end{subequations}

With the same proof with that of Theorem~\ref{thm:GNEP-Opt-Rosen} applied to each follower group~$\Follower_{i}$, we could obtain the following, more general consequence.
\begin{theorem}\label{thm:GNEP-NEP-Rosen}
  The RGNEP given in~\eqref{eq:GNEP-lower-full} can be reformulated into a NEP with the same global solution set, provided that each~$\bar{\confun}^{\Follower_{i}}_{j}$ is jointly convex.
\end{theorem}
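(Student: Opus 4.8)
The plan is to reduce Theorem~\ref{thm:GNEP-NEP-Rosen} to the single-group case already settled in Theorem~\ref{thm:GNEP-Opt-Rosen} by treating each group $\Follower_{i}$ as one aggregated player. The key structural observation is that in the RGNEP~\eqref{eq:GNEP-lower-full} the coupling between \emph{distinct} groups enters only through the objectives: the shared constraint $\bar{\Constraint}^{\Follower_{i}}(\control^{\leader})$ constrains only the within-group decisions $\control^{\Follower_{i}}$, while the strategies $\control^{-\Follower_{i}}$ of the other groups appear in each $\criterion^{\follower}$ merely as a fixed external parameter.

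First I would fix a leader action $\control^{\leader} \in \Control^{\leader}$ and, for a given group index $i$, freeze the profile $\control^{-\Follower_{i}}$ of all followers outside $\Follower_{i}$. With these quantities held constant, the subfamily of problems in~\eqref{eq:GNEP-lower-full} indexed by $\follower \in \Follower_{i}$ is exactly an RGNEP of the form treated in Theorem~\ref{thm:GNEP-Opt-Rosen}, now parametrized by the pair $(\control^{\leader},\control^{-\Follower_{i}})$ in place of $\control^{\leader}$ alone: each $\follower \in \Follower_{i}$ minimizes $\criterion^{\follower}(\control^{\leader},\hat{\control}^{\follower},\control^{-\Follower_{i}})$ over $\Constraint^{\follower}(\control^{\leader}) \cap \bar{\Constraint}^{\follower}(\control^{\leader},\control^{\Follower_{i}\setminus\{\follower\}})$, and its objective is independent of the remaining members of its own group. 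Since each $\bar{\confun}^{\Follower_{i}}_{j}$ is jointly convex in $\control^{\Follower_{i}}$, Theorem~\ref{thm:GNEP-Opt-Rosen} applies verbatim to this subgame.

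Next I would invoke that theorem group-by-group to replace each within-group RGNEP by the aggregated optimization problem of minimizing $\sum_{\follower \in \Follower_{i}} \criterion^{\follower}(\control^{\leader},\control^{\follower},\control^{-\Follower_{i}})$ over $\control^{\Follower_{i}} \in \prod_{\follower \in \Follower_{i}}\Constraint^{\follower}(\control^{\leader})$ subject to $\control^{\Follower_{i}} \in \bar{\Constraint}^{\Follower_{i}}(\control^{\leader})$, which has the same global solution set. This defines a new game whose players are the $m$ groups: group $i$ controls $\control^{\Follower_{i}}$, carries the summed objective above, and has feasible set $\prod_{\follower \in \Follower_{i}}\Constraint^{\follower}(\control^{\leader}) \cap \bar{\Constraint}^{\Follower_{i}}(\control^{\leader})$. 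Crucially, this feasible set depends only on $\control^{\leader}$ and not on the other groups' strategies, so the surviving coupling lives entirely in the objectives and the resulting game is a \emph{classical} NEP over the groups.

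Finally I would verify that the two formulations share the same solution set by comparing their fixed-point characterizations over the full profile $\control^{\Follower}$: a point $\control^{\Follower}$ solves~\eqref{eq:GNEP-lower-full} if and only if, for every $i$, the block $\control^{\Follower_{i}}$ solves the within-group RGNEP given $(\control^{\leader},\control^{-\Follower_{i}})$, and by the group-wise equivalence this holds if and only if $\control^{\Follower_{i}}$ globally minimizes the aggregated group objective given $\control^{-\Follower_{i}}$, which is precisely the Nash condition for the new NEP. I expect the only delicate point to be this last gluing step: Theorem~\ref{thm:GNEP-Opt-Rosen} supplies equality of solution sets for each frozen external parameter $\control^{-\Follower_{i}}$, and one must observe that this pointwise-in-$\control^{-\Follower_{i}}$ equivalence is exactly what identifies the Nash fixed points of the two games, since the Nash condition is imposed group-by-group at the common profile. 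Once this is noted, the conclusion follows by applying the argument of Theorem~\ref{thm:GNEP-Opt-Rosen} to each $\Follower_{i}$, as the statement already indicates.
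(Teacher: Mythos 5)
Your proposal is correct and follows essentially the same route as the paper: the paper likewise aggregates each group $\Follower_{i}$ into a single player that minimizes $\sum_{\follower \in \Follower_{i}} \criterion^{\follower}(\control^{\leader},\control^{\follower},\control^{-\Follower_{i}})$ over $\prod_{\follower \in \Follower_{i}} \Constraint^{\follower}(\control^{\leader})$ intersected with the shared constraint set, producing a classical NEP among the $m$ groups. The only cosmetic difference is that the paper re-runs the argument of Theorem~\ref{thm:GNEP-Opt-Rosen} for each group, whereas you invoke that theorem as a black box with $(\control^{\leader},\control^{-\Follower_{i}})$ as the frozen parameter and spell out the group-wise gluing step explicitly.
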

\begin{proof}
  Consider the leader~$\leader$ and a new set of followers~$\tilde{\Follower} := \{\Follower_{1},\dots,\Follower_{m}\}$.
  With this revised set of players, consider for each given~$\control^{\leader} \in \Control^{\leader}$ the following NEP: Find~$\control^{\tilde{\Follower}} = (\control^{\Follower_{1}},\dots,\control^{\Follower_{m}}) \in \prod_{i=1}^{m} \Control^{\tilde{\Follower}_{i}}$ in which for each~$i = 1,\dots,m$, the component~$\control^{\Follower_{i}}$ solves the problem
  \begin{subequations}\label{eq:NEP_reformulation_of_GNEP}
    \begin{empheq}[left=\empheqlbrace]{align}
      \min_{\control^{\Follower_{i}}} \quad & \tilde{\criterion}^{\Follower_{i}}(\control^{\leader},\control^{\Follower_{i}},\control^{-\Follower_{i}}) = \sum_{\follower \in \Follower_{i}} \criterion^{\follower}(\control^{\leader},\control^{\follower},\control^{-\Follower_{i}}) \\
      \text{s.t.} \quad & \control^{\Follower_{i}} \in \tilde{\Constraint}^{\Follower_{i}}(\control^{\leader}) = \prod_{\follower \in \Follower_{i}} \Constraint^{\follower}(\control^{\leader}) \\
                        & \control^{\Follower_{i}} \in \shared{\Constraint}^{\Follower_{i}}(\control^{\leader}).
    \end{empheq}
  \end{subequations}
  With a similar proof to that of Theorem~\ref{thm:GNEP-Opt-Rosen}, we could show that ~\eqref{eq:NEP_reformulation_of_GNEP} is equivalent to the RGNEP~\eqref{eq:GNEP-lower-full}.
\end{proof}

Now, we consider the following SLMFG, where the lower-level problem is a RGNEP:
\begin{subequations}\label{eq:SLMFG-GNEP-Lower}
  \begin{empheq}[left=\empheqlbrace]{align}
    \min_{\control^{\leader},\control^{\Follower}} \quad & \criterion^{\leader}(\control^{\leader}, \control^{\Follower}) \\
    \text{s.t.} \quad & \control^{\leader} \in \Constraint^{\leader} \\
                      & \control^{\Follower} \in \RGNEP(\control^{\leader}),
  \end{empheq}
\end{subequations}
where~$\RGNEP(\control^{\leader})$ now refers to the problem~\eqref{eq:GNEP-lower-full}.
If each function~$\confun^{\Follower_{i}}_{j}$ is jointly convex, then the above SLMFG~\eqref{eq:SLMFG-GNEP-Lower} reduces to a simplified SLMFG of the form

\begin{subequations}\label{eq:SLMFG-GNEP-Lower-reduced}
  \begin{empheq}[left=\empheqlbrace]{align}
    \min_{\control^{\leader},\control^{\tilde{\Follower}}} \quad & \criterion^{\leader}(\control^{\leader}, \control^{\tilde{\Follower}}) \\
    \text{s.t.} \quad & \control^{\leader} \in \Constraint^{\leader} \\
                      & \control^{\tilde{\Follower}} \in \tilde{NEP}(\control^{\leader}),
  \end{empheq}
\end{subequations}
where~$\tilde{\Follower} = \{\Follower_{1},\cdots,\Follower_{m}\}$ and~$\tilde{\NEP}(\control^{\leader})$ is the solution set of the problem~\eqref{eq:NEP_reformulation_of_GNEP}.
Since the above reduced problem~\eqref{eq:SLMFG-GNEP-Lower-reduced} is explicit in terms of the constraint functions~$\confun^{\follower}_{j}$'s and~$\bar{\confun}^{\Follower_{i}}_{j}$'s, the results of Section~\ref{sec:main} are applicable to the SLMFG with group-shared constraints~\eqref{eq:SLMFG-GNEP-Lower}, through its reduced form~\eqref{eq:SLMFG-GNEP-Lower-reduced}, with assumptions directly verifiable on these constraint functions.

\section*{Ackowledgments}

The first author (Parin Chaipunya) is grateful to the Center of Excellence in Theoretical and Computational Science (TaCS-CoE), at King Mongkut's University of Technology Thonburi, for its financial support.


\end{document}